\newtheorem{thm}{Theorem}[section]
\newtheorem{cor}[thm]{Corollary}
\newtheorem{prp}[thm]{Proposition}
\def\ds{$\hspace{-5pt}*\hspace{-1pt}$}
\def\1{{\bf{1}}}
\def\0{{\bf{0}}}
\def\to{{\!\top}}
\def\]*{\right]^{\mbox{\phantom{8}}}_{\mbox{\phantom{8}}}\hspace{-5pt}}
\begin{document}


\title{\bf Cospectrality results for signed graphs with two eigenvalues unequal to $\pm 1$}
\author{
Willem H. Haemers\thanks{\small{\tt{haemers@uvt.nl}}}
\\
{\small Dept. of Econometrics and O.R., Tilburg University, The Netherlands}
\\[5pt]
Hatice Topcu\thanks{\small{\tt{haticekamittopcu@gmail.com}}}
\\
{\small Dept. of Mathematics, Nev\c{s}ehir Hac{\i} Bekta\c{s} Veli University, T\"urkiye}\\
}
\date{}
\maketitle
\begin{abstract}
\noindent
Recently the collection $\cal G$ of all signed graphs for which the adjacency matrix has all
but at most two eigenvalues equal to $\pm 1$ has been determined.
Here we investigate $\cal G$ for cospectral pairs,
and for signed graphs determined by their spectrum (up to switching). 
If the order is at most 20, the outcome is presented in a clear table.
If the spectrum is symmetric we find all signed graphs in $\cal G$ determined by their spectrum, and we obtain all signed graphs
cospectral with the bipartite double of the complete graph. 
In addition we determine all signed graphs cospectral with the Friendship graph $F_\ell$, and show that there is no connected signed graph cospectral but not switching equivalent with $F_\ell$.
\\[5pt]
{Keywords:}~signed graph, graph spectrum, spectral characterization, symmetric spectrum, friendship graph.
\\
AMS subject classification:~05C50.
\end{abstract}

\section{Introduction}

A {\em signed graph} $G^\sigma$ is a graph $G=(V, E)$ together with
a function $\sigma : E \rightarrow \{-1, +1\}$. 
So, every edge is either positive or negative.
The graph $G$ is called the {\em underlying graph} of $G^\sigma$.
The adjacency matrix $A$ of $G^\sigma$ is obtained from the adjacency matrix of $G$,
by replacing $1$ by $-1$ whenever the corresponding edge is negative.
The signed graph $G^{-\sigma}$ with adjacency matrix $-A$ is called the {\em negative} of $G^\sigma$.
The spectrum of $A$ is also called the spectrum of the signed graph $G^\sigma$.
For a vertex set $X\subset V$, the operation that changes the sign of all edges between $X$ and $V\setminus X$ is called switching.
In terms of the matrix $A$, switching multiplies the rows and columns of $A$ corresponding to $X$ by $-1$.
If a signed graph can be switched into an isomorphic copy of another signed graph, the two signed graphs are called
{\em switching isomorphic}.
Switching isomorphic signed graphs have similar adjacency matrices and therefore they are cospectral (that is, they have the same spectrum).
We define a signed graph $G^\sigma$ to be {\em determined by its spectrum up to switching} (DSS for short), if every signed graph cospectral with $G^\sigma$ is switching isomorphic to $G^\sigma$.
It is obvious that a signed graph is DSS
if and only if its negative is DSS.
For more about the spectra of signed graphs we refer to~\cite{BCKW}.
\\

Let $\cal G$ be the set of signed graphs with at most two eigenvalues unequal to $\pm 1$.
All signed graphs in $\cal G$ have been determined. 
The unsigned graphs in $\cal G$ were found by Cioab\u{a}, Haemers, Vermette and Wong in \cite{CHVW}.
The remaining signed graphs in $\cal G$ were determined by the present authors in \cite{HT,HT'}, and independently, by Wang, Hou and Li in \cite{WHL}.  
If a signed graph $G^\sigma\in{\cal G}$ has all eigenvalues 
at least $-1$, or at most $1$, then $G^\sigma$ or its negative $G^{-\sigma}$ is switching isomorphic to the disjoint union of unsigned complete graphs, which is DSS (see~\cite{HT}).
Therefore we restrict to the subset $\cal G'$ of signed graphs in $\cal G$ with one eigenvalue $r>1$ and one eigenvalue $s<-1$. 
If $\chi(x)$ is the characteristic polynomial of a signed graph $G^\sigma$,
then $G^\sigma\in{\cal G'}$ if and only if
$$
\chi(x)=(x^2 - ax - b)(x-1)^f (x+1)^g,
$$
for integers $a$, $b>|a|+1$, $f$ and $g$.
Then $a=r+s$, $b=-rs$, $f+g+2=n=|V|$, and $f-g=a$ (since trace$(A)=0$).
We will call $(a,b,n)$ the {\em characteristic triple} of $G^\sigma$.
Thus we have that two signed graphs in $\cal G'$ are cospectral if and only if they have the same characteristic triple.
Note that $\cal G'$ (as well as $\cal G$) is closed under switching, taking the negative, and the addition or deletion of isolated edges.
The signed graphs in $\cal G'$ are presented in Table~\ref{classification} in such a way that each $G^\sigma\in{\cal G'}$ can be obtained from a signed graph from Table~\ref{classification} by switching, taking the negative, or adding a number of isolated edges.
We use $O$ and $J$ for the all-zeros, and the all-ones matrices, respectively; $I_n$ and $R_n$ are the 
identity and the reverse identity matrix of order $n$
(i.e. $(R_n)_{i,j}=1$ if $i+j=n+1$ and $0$ otherwise).  
The last column gives the characteristic triples of the presented signed graphs.

\begin{center}
{\small
\begin{longtable}[t]{|l|l|l|}
\hline
adjacency matrix\phantom{$8^{8^8}$}
&
parameter restrictions
&
~~characteristic triple
\\[5pt]
\hline
$A_{0} =
\left[
\begin{array}{cc}
\!\!\!J-I_m & J
\\
J &\! -J +I_\ell
\end{array}
\]*$
&
$m\geq\ell\geq 2$
&
$~~(~m-\ell,~~2m\ell-m-\ell+1,~~m+\ell~)$
\\ 
\hline
$A_{1}
=\left[
\begin{array}{cc}
\!J\!-\!I_m\! & J \\ J & \!-R_{2\ell}\!
\end{array}
\]*$
&
$m\geq 1,~\ell\geq 2$
&
$~~(~m-2,~~2m\ell+m-1,~~m+2\ell~)$
\\
\hline
$
A_2
=\left[
\begin{array}{cc}
\!R_{2m}\! & J \\ J & \!-R_{2\ell}\!
\end{array}
\]*$
&
$m\geq\ell\geq 2$
&
$~~(~0,~~4m\ell+1,~~2m+2\ell~)$
\\
\hline
$
A_3
=\left[
\begin{array}{rrcr}
\!J\!-\!I_m\! & \1     & \1  & O\ \\
\ \1^\top\    & 0      &  1  & -\1^\top \\
\ \1^\top\    & 1      &  0  &  \1^\top \\
O\ \          & \!-\1  & \1  & \!I_\ell-\!J\!
\end{array}
\]*$
&
$m\geq\ell\geq 1$
&
$~~(~m-\ell,~~(m+1)(\ell+1),~~m+\ell+2~)$
\\
\hline
$
A_4=\left[
\begin{array}{cccc}
\!J\!-\!I_m\! & J                & \, J  & O \\
J\ \          & \!I_\ell\!-\!J\! & \, O  & J \\
J\ \          & O                & R_2\! & O \\
O\ \          & J                & \, O  & \!-R_2\! 
\end{array}
\]*$
&
$m\geq\ell\geq 1$
&
$~~(~m-\ell,~~2m\ell+m+\ell+1,~~m+\ell+4~)$
\\
\hline
$
A_5
=\left[
\begin{array}{ccc}
\! J\! -\! I_m \! & J            & J \\
J           &\! J\!-\!I_\ell \!& O \\
J           & O            &\! I_k\!-\! J \! \\
\end{array}
\]*$
&
$(m,\ell)=
\left\lbrace\!
\begin{array}{l}
(3,8)\\(4,6)\\(6,5)
\end{array}
\right.
,~k\geq 1$
&
$
\begin{array}{l}
(~9-k,~~11k+10,~~k+11~)
\\
(~8-k,~~11k+9,~~k+10~)
\\
(~9-k,~~14k+10,~~k+11~)
\end{array}
$
\\
\hline

$
A_6
=\left[
\begin{array}{ccc}
\! J\!-\! I_m\! & J        & J \\
J     & \! I_\ell\!-\! J \! & O \\
J     & O        & \! R_{2k}\!
\end{array}
\]*$
&
$m\geq 1,~(\ell,k)=
\left\lbrace
\hspace{-3pt}
\begin{array}{l}
(3,4)
\\
(4,3)
\end{array}
\right.$
&
$
\begin{array}{l}
(~m-1,~~11m+2,~~m+11~)
\\
(~m-2,~~11m+3,~~m+10~)
\end{array}
$
\\
\hline
$
A_7=\left[
\begin{array}{ccc}
R_{2m}& J        & J \\
J     & \! J\!-\! I_\ell\! & O \\
J     & O        & \! -R_{2k}\!
\end{array}
\]*$
&
$m\geq 1,~(\ell,k)=
\left\lbrace
\hspace{-3pt}
\begin{array}{l}
(3,3)
\\
(4,2)
\end{array}
\right.$
&
$
\begin{array}{l}
(~1,~~18m+2,~~2m+9~)
\\
(~2,~~16m+3,~~2m+8~)
\end{array}
$
\\
\hline
 
$A_{8}=\left[
\begin{array}{cccc}
R_2 & J & \1  & O \\
J   & \!I_m\!-\!J\! & \0  & J \\
\1^\to\! & \0^\to & 0 & \0^\to \\
O & J & \0 & -R_4 
\end{array}
\]*$
&
$m\geq 1$ 
&
$~~(~1-m,~~6m+2,~~m+7~)$
\\
\hline
$
A_{9}=\left[
\begin{array}{cccc}
R_{2m} & J & J & \0 \\
J   & R_{2} & O & \1 \\
J & O & -R_2 & \0 \\ 
\0^\to & \1^\to & \0^\to & 0 
\end{array}
\]*$
&
$m\geq 1$ 
&
$~~(~1,~~8m+2,~~2m+5~)$
\\ 
\hline
$A_{10}=\left[
\begin{array}{cccc}
\! J\!-\! I_m\!  & J & O & O \\
J & O  & J & \!J\!-\! I_\ell\! \\
O & J & \!I_m\!-\!J  & O \\ 
O & \! \! J\!-\!I_\ell\! & O & O 
\end{array}
\]*$
&
$(m,\ell)=
\left\lbrace
\hspace{-3pt}
\begin{array}{l}
(3,4)
\\
(4,3)
\end{array}
\right.
$
&
$
\begin{array}{l}
(~0,~~36,~~14~)
\\
(~0,~~36,~~14~)
\end{array}
$
\\
\hline
$A_{11}=\left[
\begin{array}{cccc}
\! J\!-\!I_m\!  & J & J & O \\
J & \! I_m\!-\!J\!  & O & J  \\
J & O & O & \! J\!-\!I_\ell\! \\
O & J & \! J\!-\!I_\ell\! & O
\end{array}
\]*$
&
$(m,\ell)=
\left\lbrace
\hspace{-3pt}
\begin{array}{l}
(3,4)
\\
(4,3)
\end{array}
\right.
$
&
$\begin{array}{l}
(~0,~~45,~~14~)
\\
(~0,~~52,~~14~)
\end{array}
$
\\
\hline

$A_{12}=\left[
\begin{array}{cccc}
\! J\!-\! I_m\! & J & J & O \\
J & \! I_\ell\!-\!J\!     & O & J \\
J & O &   \! J\!-\!I_k\!    & J\\
O & J & J & \! I_j\!-\!J\!\!
\end{array}
\]*$
&
$(m,\ell,k,j)=
\left\lbrace
\hspace{-3pt}
\begin{array}{l}
(4,4,4,4)\\
(6,3,3,6)\\
(6,4,3,4)\\
(6,6,3,3)
\end{array}
\right.
$
&
$\begin{array}{l}
(~0,~~81,~~16~)\\
(~0,~~109,~~18~)\\
(-1,~~92,~~17)\\
(~0,~~100,~~18~)
\end{array}$
\\
\hline
$
A_{13}=\left[
\begin{array}{cccc}
\! J\!-\! I_m\! & J              & O               & \0 \\
J               & \!-R_{2\ell}\! & J               & \1 \\
O               & J              & \! I_4\!-\! J\! & \0 \\ 
\0^\to          & \1^\to         & \0^\to          & 0 
\end{array}
\]*$
&
$(m,\ell)=
\left\lbrace
\hspace{-3pt}
\begin{array}{l}
(5,3)
\\
(6,2)
\end{array}
\right.
$
&
$\begin{array}{l}
(~0,~~72,~~16~)
\\
(~1,~~60,~~15~)
\end{array}
$
\\
\hline

$
A_{14}=\left[
\begin{array}{ccc}
\! J\!-\! I_m\! & J & O \\
J & \!-R_{2\ell}\! & J \\
O & J & \!-R_{4}\!
\end{array}
\]*$
&
$(m,\ell)=
\left\lbrace
\hspace{-3pt}
\begin{array}{l}
(5,3)
\\
(6,2)
\end{array}
\right.
$
&
$\begin{array}{l}
(~1,~~62,~~15~)
\\
(~2,~~51,~~14~)
\end{array}
$
\\
\hline

$A_{15}=\left[
\begin{array}{cccc}
\! J\!-\!I_m\!  & J & J & O \\
J & \! I_\ell\!-\!J\! & O & J \\
J & O & \!R_{2k}\! & J \\
O & J & J & \! -R_{2j}\!\!
\end{array}
\]*$
&
$(m,\ell,k,j)=
\left\lbrace
\hspace{-3pt}
\begin{array}{l}
(3,3,3,3)\\
(4,3,3,2)\\
(4,4,2,2)
\end{array}
\right.
$
&
$
\begin{array}{l}
(~0,~~85,~~18~)\\
(~1,~~78,~~17~)\\
(~0,~~73,~~16~)
\end{array}
$
\\
\hline

$A_{16}=\left[
\begin{array}{cccc}
R_2 & J & J & O \\
J & -R_2 & O & J  \\
J & O & O & \!I_3\! \\
O & J & \!I_3\! & O
\end{array}
\]*
$
&&
$~~(~0,~~17,~~10~)$
\\
\hline

$
A_{17}=\left[
\begin{array}{cccccc}
R_2 & J & J & \1 & O &  \0 \\[-2pt]
J & -R_2 & O & \0 & J &  \1 \\[-2pt] 
J & O & R_2 & \0 &  J & \0 \\ 
\1^\to & \0^\to & \0^\to & 0 & \0^\to  & 0 \\[-2pt]  
O & J & J & \0 & -R_2 & \0  \\
\0^\to & \1^\to & \0^\to & 0 & \0^\to & 0 \\  
\end{array}
\]*$
&&
$~~(~0,~~20,~~10~)$
\\
\hline

$
A_{18}=\left[
\begin{array}{cccc}
\! J\!-\!I_m\! & J & \1 & O \\
J   & -R_{2\ell} & \0 & J \\
\1^\to & \0^\to & 0 & \0^\to \\ 
O & J & \0 & -R_2 
\end{array}
\]*$
&
$(m,\ell)=
\left\lbrace
\hspace{-3pt}
\begin{array}{l}
(3,4)
\\
(4,3)
\end{array}
\right.
$
&
$\begin{array}{l}
(~0,~~45,~~14~)
\\
(~1,~~44,~~13~)
\end{array}
$
\\
\hline

$
A_{19}=\left[
\begin{array}{ccccc}
R_2 & J & J & \1 & O \\
J & \! I_m\!-\!J\! & O & \0 & J \\
J & O & \!R_{2\ell}\! & \0 & J \\
\1^\to & \0^\to & \0^\to & 0 & \0^\to \\
O & J & J & \0 & -R_2 \\
\end{array}
\]*$
&
$(m,\ell)=
\left\lbrace
\hspace{-3pt}
\begin{array}{l}
(3,3)
\\
(4,2)
\end{array}
\right.
$
&
$\begin{array}{l}
(~0,~~40,~~14~)
\\
(-1,~~38,~~13~)
\end{array}
$
\\
\hline

$A_{20}=\left[
\begin{array}{cc}
\!J\!-\!I_m\! & J \\ J & \!R_{2\ell}\!
\end{array}
\]*$
&
$m\geq 2,~\ell\geq 2$
&
$~~(~m,~~2m\ell-m+1,~~m+2\ell~)$
\\
\hline

$
A_{21}=\left[
\begin{array}{cc}
\!R_{2m}\! & J \\ J & \!R_{2\ell}\!
\end{array}
\]*$
&
$m\geq\ell\geq 2$
&
$
~~(~2,~~4m\ell-1,~~2m+2\ell~)$
\\
\hline

$
A_{22}=\left[
\begin{array}{cc}
O & \!J\!-\!I_m\! \\ \!J\!-\!I_m\! & O
\end{array}
\]*$
&
$m\geq 3$
&
$~~(~0,~~(m-1)^2,~~2m~)$
\\
\hline

$A_{23}=\left[
\begin{array}{cccc}
O & O & \!J\!-\!I_3\! & J \\
O & \!O & O & \!J\!-\!I_3\! \\
\!J\!-\!I_3\! & O & O & O \\
J & \!J\!-\!I_3\! & O & \!O \\
\end{array}
\]*$
&&
$~~(~0,~~16,~~12~)$
\\
\hline

$A_{24}=\left[
\begin{array}{cccc}
0 & \0^\top & 1 & \1^\top \\
\0 & O & \1 & I_4 \\
1 & \1^\top & 0 & \0^\top \\
\1 & I_4 & \0 & O 
\end{array}
\]*$
&&
$~~(~0,~~9,~~10~)$
\\
\hline

$A_{25}=
\left[
\begin{array}{ccc}
\! J\! -\! I_m \! & J & O \\
J & O & \! J\! -\! I_\ell \! \\
O & \! J\! -\! I_\ell \! & 0 
\end{array}
\]*$
&
$(m,\ell)=
\left\lbrace
\hspace{-3pt}
\begin{array}{l}
(3,5)
\\
(4,4)
\end{array}
\right.
$
&
$\begin{array}{l}
(~1,~~32,~~13~)
\\
(~2,~~27,~~12~)
\end{array}
$
\\
\hline

$
A_{\infty}=\left[
\begin{array}{cc}
\!J\!-\!I_m\! & O \\ O & \!I_\ell\!-\!J\!
\end{array}
\]*$
&
$m\geq\ell\geq 3$
&
$~~(~m-\ell,~~(m-1)(\ell-1),~~m+\ell~)$
\\
\hline
\caption{{  The signed graphs in $\cal G'$\hspace{-15pt}}\phantom{$8^{8^8}$}}\label{classification}
\end{longtable}
}
\end{center}

Table~\ref{classification} is obtained by combining the results from \cite{CHVW,HT,HT'}.
The matrices $A_1$ to $A_{19}$ are the same as in \cite{HT'} (after correction of a typo in the eigenvalues of $A_7(m,3,3)$),
$A_0$ is a signed complete graph called $\widetilde{J}$ in \cite{HT'}, 
$A_{20}$ to $A_{25}$ are the unsigned graphs in $\cal G'$ obtained in \cite{CHVW}, 
and $A_\infty$ is the disjoint union of 
two complete graphs, one with all signs positive and one with all signs negative.
The original descriptions had some overlap, which we have removed here.
Case $(v)$ of Theorem~1 from \cite{CHVW} has been removed because it is of type $A_3$ with $k=1$. 
Also $A_{20}$ with $m=1$ is removed because it is switching isomorphic with the negative of $A_1$, 
and for $A_0$, $A_2$, $A_3$, $A_4$, $A_{21}$, and $A_\infty$ the cases with $m<\ell$ are removed because they are switching isomorphic to the negatives with $m$ and $\ell$ interchanged.
\\

The aim of this paper is to find the cospectral signed graphs for every signed graph in $\cal G'$, and decide which ones are DSS.
However, it turned that there are many cospectral coincidences, which makes a comprehensive discription too messy to be usefull. 
Therefore we decided to restrict to the signed graphs in $\cal G'$ with at most 20 vertices,
and in addition, to pay attention to a number of interesting special cases.

Up to switching, taking the negative and ignoring isolated edges there are almost 600 signed graphs in $\cal G'$ with at most 20 vertices.
For each of these we indicate if they are DSS and we give all cospectral signed graphs if they are not DSS;
see Section~\ref{20}.

If $G^\sigma\in{\cal G'}$ has a characterisic triple $(a,b,n)$ with $a=0$, then the spectrum of $G^\sigma$ is symmetric, which means that it is invariant under multiplication by $-1$.
Unsigned graphs with symmetric spectrum are bipartite, and bipartite signed graphs are switching isomorphic to their negatives.
This motivated the search for 
signed graphs with symmetric spectrum which are not switching isomorphic to their negatives
(see~\cite{BCKW,GHMP,GS}).
Here, in Section~\ref{symmetry} we find many signed graphs with this property including some which are cospectral with a bipartite graph.  
We also find all signed graphs in $\cal G'$ with symmetric spectrum which are DSS, 
and we determine all signed graphs cospectral with the bipartite double of the complete graph.

The original motivation for the research of this paper 
comes from the determination of the unsigned graphs in $\cal G'$~\cite{CHVW}.
This has led to the spectral characterization of many (unsigned) graphs including the friendship graphs
(consisting of a number of edge-disjoint triangles meeting in one vertex), which had been an open problem for a number of years.
In Section~\ref{friendship} we generalize this result and determine which friendship graphs are DSS.

\section{Cospectrality}

We start with some infinite families of cospectral signed graphs in $\cal G'$.
The following proposition follows easily from Table~\ref{classification}.
(We use \lq$+${\rq} for the disjoint union
of two signed graph, and we identify a signed graph with its adjacency matrix.)

\begin{prp} \label{cosp}
The following pairs of signed graphs are cospectral, and so are their negatives.
\\
$(i)$ 
$A_4 (m,\ell)$, and $A_0(m+1,\ell+1)+K_2$
($m,\ell\geq 1$),
\\
$(ii)$
$A_\infty(m+2,\ell+2)$, and $A_3(m,\ell)+K_2$ 
($m,\ell\geq 1$),
\\
$(iii)$
$A_\infty(m,m)$, and $A_{22}(m)$ ($m\geq 3$).
\end{prp}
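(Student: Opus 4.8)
The plan is to reduce everything to the characteristic triple. As noted just before the statement, two signed graphs in ${\cal G'}$ are cospectral exactly when they have the same triple $(a,b,n)$, so it suffices to read the relevant triples off Table~\ref{classification} and check that they agree. The only auxiliary observation I need is the effect of adjoining a $K_2$. Since $K_2$ is an isolated edge with eigenvalues $+1$ and $-1$, forming the disjoint union with it multiplies the characteristic polynomial $(x^2-ax-b)(x-1)^f(x+1)^g$ by $(x-1)(x+1)$; hence $a$ and $b$ are unchanged while $f$, $g$, and $n$ each increase (by $1$, $1$, and $2$ respectively). In triple language, if $H\in{\cal G'}$ has triple $(a,b,n)$ then $H+K_2$ has triple $(a,b,n+2)$. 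This is just the closure of ${\cal G'}$ under adding isolated edges recorded in the introduction.

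Granting this, each part is a single substitution. For $(i)$ I would evaluate the $A_0$ row at $(m+1,\ell+1)$, which gives $(m-\ell,\,2m\ell+m+\ell+1,\,m+\ell+2)$; adjoining $K_2$ raises the order by two, yielding $(m-\ell,\,2m\ell+m+\ell+1,\,m+\ell+4)$, exactly the $A_4(m,\ell)$ triple. For $(ii)$ the $A_3(m,\ell)$ triple is $(m-\ell,\,(m+1)(\ell+1),\,m+\ell+2)$, so $A_3(m,\ell)+K_2$ has triple $(m-\ell,\,(m+1)(\ell+1),\,m+\ell+4)$; evaluating the $A_\infty$ row at $(m+2,\ell+2)$ returns the same triple, since $(m+2)-(\ell+2)=m-\ell$, $((m+2)-1)((\ell+2)-1)=(m+1)(\ell+1)$, and $(m+2)+(\ell+2)=m+\ell+4$. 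For $(iii)$ the $A_\infty$ row at $(m,m)$ is $(0,\,(m-1)^2,\,2m)$, which is literally the $A_{22}(m)$ triple, so here no $K_2$ is required.

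The assertion about negatives is then automatic: negating an adjacency matrix negates its whole spectrum, so cospectral signed graphs always have cospectral negatives (equivalently, the negative of the triple $(a,b,n)$ is $(-a,b,n)$, and the matched triples inherit this). I do not expect a genuine obstacle, since the entire content is the three identities above. The only point deserving care is bookkeeping over the parameter ranges: Table~\ref{classification} lists $A_0$ and $A_\infty$ under normalizing inequalities (for instance $m\ge\ell\ge 3$ for $A_\infty$) imposed merely to suppress duplicate rows, whereas the proposition runs over $m,\ell\ge 1$. I would simply observe that the cospectrality is a statement about the matrices themselves, for which the triple formulas are polynomial identities valid on the full ranges, so those normalizing constraints do not limit the conclusion.
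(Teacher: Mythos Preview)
Your argument is correct and is exactly what the paper intends: it simply asserts that the proposition ``follows easily from Table~\ref{classification}'', i.e., one compares characteristic triples after accounting for the $+K_2$ shift in $n$. Your extra remarks on the effect of adjoining $K_2$, on negation, and on the normalizing parameter inequalities merely make explicit what the paper leaves implicit.
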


Note that item $(iii)$ is a special case of the more general result that the disjoint union of a signed graph and its negative is cospectral with its bipartite double.
From Proposition~\ref{cosp} it follows that none of the signed graphs $A_4$, $A_{22}$ and $A_\infty$ are DSS.
But it was proved in~\cite{HT} and in~\cite{WHL} that every signed graph with matrix $A_0$ is DSS.
By inspection of Table~\ref{classification} we can find many other families of cospectral signed graphs in $\cal G'$. 
Sometimes cospectral signed graphs occur within one type of matrix, for example:

\begin{prp}
The signed graphs $A_2(m,\ell)+\alpha K_2$ and 
$A_2(m',\ell')+\alpha'K_2$ are cospectral if and only if $m\ell=m'\ell'$ and $m+\ell+\alpha = m'+\ell'+\alpha'$.
\end{prp}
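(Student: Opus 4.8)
The plan is to read off the characteristic triple of $A_2(m,\ell)$ from Table~\ref{classification} and use the fact, established in the introduction, that two signed graphs in $\cal G'$ are cospectral if and only if they share the same characteristic triple $(a,b,n)$. From the table, $A_2(m,\ell)$ has triple $(0,\,4m\ell+1,\,2m+2\ell)$. Adding $\alpha$ isolated edges ($\alpha K_2$) contributes $\alpha$ eigenvalues equal to $+1$ and $\alpha$ equal to $-1$, so it leaves $a$ and $b$ unchanged while increasing the order $n$ by $2\alpha$. Hence $A_2(m,\ell)+\alpha K_2$ has characteristic triple $(0,\,4m\ell+1,\,2m+2\ell+2\alpha)$.

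First I would write down the corresponding triple for $A_2(m',\ell')+\alpha'K_2$, namely $(0,\,4m'\ell'+1,\,2m'+2\ell'+2\alpha')$. Since the first coordinate is $0$ in both cases automatically, cospectrality reduces to equating the second and third coordinates. Equating the $b$-values gives $4m\ell+1 = 4m'\ell'+1$, i.e.\ $m\ell = m'\ell'$; equating the $n$-values gives $2m+2\ell+2\alpha = 2m'+2\ell'+2\alpha'$, i.e.\ $m+\ell+\alpha = m'+\ell'+\alpha'$. This is exactly the stated condition, so the \emph{if} and \emph{only if} directions both follow at once from the cospectrality criterion.

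I would phrase the argument as a single biconditional chain: by the criterion recalled in the introduction, the two signed graphs (both lying in $\cal G'$) are cospectral precisely when their characteristic triples coincide, and the displayed computations show this happens exactly when $m\ell=m'\ell'$ and $m+\ell+\alpha=m'+\ell'+\alpha'$. The only subtlety worth flagging is that the cospectrality-iff-same-triple equivalence is stated for members of $\cal G'$, and one should note that $A_2(m,\ell)+\alpha K_2$ indeed remains in $\cal G'$ for all admissible parameters, since $\cal G'$ is closed under the addition of isolated edges (as remarked after the definition of the characteristic triple). Given that closure remark, there is no genuine obstacle here; the statement is a direct bookkeeping consequence of Table~\ref{classification} and the triple criterion.
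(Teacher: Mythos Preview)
Your proposal is correct and matches the paper's approach: the paper states this proposition without an explicit proof, relying on exactly the observation you make, namely that the characteristic triple of $A_2(m,\ell)+\alpha K_2$ is $(0,\,4m\ell+1,\,2(m+\ell+\alpha))$ and that cospectrality within $\cal G'$ is equivalent to equality of characteristic triples. Your remark about closure of $\cal G'$ under adding isolated edges is a nice point to make explicit, but otherwise there is nothing to add.
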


This proposition shows that a set of signed graphs in $\cal G'$ that are mutually cospectral but not switching isomorphic can have arbitrary large cardinality.
Indeed, take for example $m=\ell=2^k$, then there are $k+1$ different choices for $m'$ and $\ell'$ so the proposition gives $k+1$ mutually cospectral signed graphs of order $2+2^{2k+1}$.  

For a given signed graph $G^\sigma\in\cal G'$ with characteristic triple $(a,b,n)$ it is not hard to find all signed graphs cospectral with $G^\sigma$, and thus to decide if $G^\sigma$ is DSS.
We just have to search in Table~\ref{classification} for characteristic triples $(a',b',n')$ with $a'=\pm a$, $b'=b$ and $n'\leq n$.
If $n=n'$, the corresponding signed graphs (when $a=a'$) or their negatives  (when $a'=-a$) are cospectral with $G^\sigma$.
If $n'<n$ we need to add $(n-n')/2$ isolated edges.
We have worked this out for all signed graphs with $n\leq 20$.

\section{The signed graphs in $\cal G'$ of order at most $20$}

We generated the characteristic triples of all signed graphs given in Table~\ref{classification} with at most 20 vertices.
Note that this includes all cases 
which are not part of an infinite family.
The outcome is presented in Table~\ref{list} in the following way:
Each entry consists of a 4-tuple $(a,b,n,A_i)$ consisting of
the characteristic triple followed by the corresponding matrix type.
If $a<0$ the 4-tuple is replaced by its negative: $(-a,b,n,-A_i)$. 
If $a=0$ and the corresponding signed graph is not switching isomorphic to its negative we get two 
4-tuples: $(0,b,n,A_i)$, and $(0,b,n,-A_i)$
(see next section for more about the case $a=0$).
To save space we do not write the values of the parameters. 
Only $A_{10}(3,4)$ and $A_{10}(4,3)$ give identical 4-tuples: $(0,36,14,A_{10})$; in all other cases the value of the parameters can easily be deduced.
The 4-tuples have been ordered lexicographically. 
This way all 4-tuples with the same $a$ and $b$ are clustered together as consequitive entries with nondecreasing orders.
In the table the different clusters are separated by a line.
For each signed graph from such a cluster
the cospectral signed graphs are easily found. 
They correspond to the 4-tuples in that cluster with the same order, together with the ones in that cluster with smaller order extended with an appropriate number of isolated edges. 
Thus a signed graph belonging to a 4-tuple from the table is DSS if it is the first one in the corresponding cluster,
and there is no other 4-tuple in that cluster with the same order. 
In Table~\ref{list} we marked these 4-tuples with a $*$.
Thus we have

\begin{thm}\label{20}
A signed graph $G^\sigma\in{\cal G'}$ of order at most 20 with no isolated edges is DSS if and only if $G^\sigma$, or its negative $G^{-\sigma}$ corresponds to a 4-tuple in Table~\ref{list} marked with a $*$.
\end{thm}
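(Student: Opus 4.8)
The plan is to reduce the theorem to a direct bookkeeping claim about Table~\ref{list}, using the cospectrality criterion already established in the excerpt. Recall from Section~2 that two signed graphs in $\cal G'$ are cospectral if and only if they share the same characteristic triple, and that adding an isolated edge $K_2$ increases the order $n$ by $2$ while leaving $a$ and $b$ unchanged (since $\cal G'$ is closed under adding isolated edges and $K_2$ contributes eigenvalues $\pm 1$). Hence, for a signed graph $G^\sigma\in\cal G'$ with triple $(a,b,n)$, every cospectral mate arises from a Table~\ref{classification} entry with triple $(a',b',n')$ satisfying $b'=b$, $a'=\pm a$, and $n'\le n$ with $n-n'$ even, the shortfall being made up by $(n-n')/2$ isolated edges. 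This is exactly the search procedure described just before Section~3.

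First I would fix the normalization: by taking negatives we may assume $a\ge 0$, and when $a<0$ we replace the $4$-tuple by its negative $(-a,b,n,-A_i)$, as the table already does; the case $a=0$ with $G^\sigma$ not switching isomorphic to its negative is handled by listing both $(0,b,n,A_i)$ and $(0,b,n,-A_i)$. Under this normalization, two table entries are cospectral (after possibly adding isolated edges to the shorter one) precisely when they have equal $a$ and $b$, i.e. when they lie in the same \emph{cluster} in the lexicographically ordered Table~\ref{list}. The key observation is that within a cluster the entries appear in nondecreasing order of $n$, so the cospectral mates of a given entry are exactly the other entries of the \emph{same} order together with all entries of \emph{strictly smaller} order (each padded with the appropriate number of isolated edges).

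Next I would translate the DSS condition into this clustered picture. A signed graph $G^\sigma$ with no isolated edges is DSS if and only if it has no cospectral mate other than one switching isomorphic to itself. By the previous paragraph this happens exactly when, within its cluster, there is no entry of strictly smaller order (otherwise that entry plus isolated edges is a genuinely different cospectral graph) and no \emph{other} entry of the same order (such an entry would be cospectral but, being a distinct matrix type or distinct parameters, not switching isomorphic). Equivalently, $G^\sigma$ must correspond to the \emph{first} entry of its cluster and must be the unique entry of its order in that cluster. These are precisely the entries marked with a $*$, which establishes the equivalence.

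The one point requiring care---the main obstacle---is the claim that two distinct table entries of equal order and equal $(a,b)$ are genuinely not switching isomorphic, so that the asterisk really does detect all DSS failures. This rests on the completeness and nonredundancy of Table~\ref{classification}: the overlaps noted in the text (e.g. case $(v)$ of \cite{CHVW}, $A_{20}$ with $m=1$, and the $m<\ell$ duplications for $A_0,A_2,A_3,A_4,A_{21},A_\infty$) have been removed, and the sole remaining coincidence, $A_{10}(3,4)$ and $A_{10}(4,3)$ both giving $(0,36,14)$, is flagged explicitly. I would verify by inspection of the finitely many clusters that, apart from that flagged pair, no two surviving entries of the same order and same $(a,b)$ are switching isomorphic, so that each such coincidence is a true cospectral non-switching pair. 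Since there are only finitely many entries with $n\le 20$, this verification is a finite check rather than a structural argument, and with it the theorem follows.
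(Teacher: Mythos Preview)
Your proposal is correct and follows essentially the same route as the paper: the paper's ``proof'' is precisely the paragraph immediately preceding the theorem, which explains that the table was generated by enumerating all Table~\ref{classification} entries with $n\le 20$, clustering them by $(a,b)$ in lexicographic order, and marking with a $*$ exactly those entries that are first in their cluster and have no other entry of the same order there. You have simply spelled out in more detail the justification the paper leaves implicit---in particular the point that distinct surviving entries with the same $(a,b,n)$ are genuinely not switching isomorphic, which the paper takes for granted from the overlap-removal discussed after Table~\ref{classification}.
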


Note that a signed graph $G^\sigma\in{\cal G'}$ with isolated edges can be DSS.
This is the case if $G^\sigma$ with the isolated edges removed is DSS, and every other signed graph with the same $|a|$ and $b$ in its characteristic triple has larger order than $G^\sigma$.
For example $A_2(3,3)+\alpha K_2$
with characteristic triple $(0,37,12+2\alpha)$ is DSS
if $0\leq\alpha\leq 3$.

By inspection of Table~\ref{list} we see some interesting sets of mutually cospectral signed graphs.
For example there are eleven characteristic triples $(a,b,n)$ with
$a=0$, $b=25$ and $n\in\{8,10,12,14\}$.
This means that there are eleven signed graphs with characteristic triple  
$(0,25,14)$ which are mutually cospectral,
but not switching isomorphic.
We also see that there are five mutually cospectral signed graphs with triple $(0,25,10)$,
of which four are connected.

\begin{center}
\begin{longtable}[t]{c}
\begin{tabular}{|l|l|l|l|l|l|}
\hline
\multicolumn{6}{|l|}{}
\\
\cline{1-1}\cline{3-5}
\ds(0,4,4,A$_{3}$)&(0,29,16,-A$_{1}$)&(0,72,16,A$_{13}$)&\ds (1,12,7,A$_{3}$)&\ds(1,58,19,A$_{9}$)&(2,27,12,A$_{25}$)\\[1pt] \cline{2-2} \cline{5-5}
(0,4,6,A$_{22}$)&(0,33,12,A$_{2}$)&(0,72,16,-A$_{13}$)&(1,12,9,A$_{\infty}$)&\ds(1,60,15,A$_{13}$)&(2,27,16,A$_{20}$)\\[1pt]
\cline{3-3}\cline{5-6}
(0,4,6,A$_{\infty}$)&(0,33,12,-A$_{2}$)&\ds(0,73,16,A$_{15}$)&(1,12,13,-A$_{1}$)&\ds(1,62,15,A$_{14}$)&\ds(2,31,12,A$_{21}$)\\[1pt]
\cline{1-1}\cline{4-5}
\ds(0,5,4,A$_{0}$)&(0,33,18,A$_{1}$)&(0,73,18,A$_{2}$)&\ds(1,14,7,A$_{1}$)&\ds(1,72,13,A$_{0}$)&(2,31,18,A$_{20}$)\\[1pt]
\cline{6-6}
(0,5,6,A$_{4}$)&(0,33,18,-A$_{1}$)&(0,73,18,-A$_{2}$)&(1,14,9,-A$_{8}$)&(1,72,15,A$_{4}$)&(2,35,12,A$_{1}$)\\[1pt]
\cline{1-3}
(0,8,8,A$_{8}$)&\ds(0,36,12,A$_{3}$)&\ds(0,81,16,A$_{12}$)&(1,14,11,-A$_{6}$)&(1,72,17,A$_{3}$)&(2,35,12,A$_{3}$)\\[1pt]
(0,8,8,-A$_{8}$)&(0,36,14,A$_{10}$)&(0,81,18,A$_{2}$)&(1,14,15,-A$_{1}$)&(1,72,19,A$_{\infty}$)&(2,35,12,A$_{7}$)\\[1pt]
\cline{1-1} \cline{4-5}
(0,9,6,A$_{1}$)&(0,36,14,A$_{10}$)&(0,81,18,-A$_{2}$)&\ds(1,16,17,-A$_{1}$)&\ds(1,74,17,A$_{7}$)&(2,35,12,A$_{21}$)\\[1pt]
\cline{4-5}
(0,9,6,-A$_{1}$)&(0,36,14,A$_{22}$)&(0,81,18,A$_{3}$)&\ds(1,18,7,A$_{0}$)&\ds(1,78,17,A$_{15}$)&(2,35,14,A$_{6}$)\\[1pt]
\cline{5-5}
(0,9,6,A$_{3}$)&(0,36,14,A$_{\infty}$)&(0,81,20,A$_{22}$)&(1,18,9,A$_{4}$)&\ds(1,86,17,A$_{5}$)&(2,35,14,A$_{\infty}$)\\[1pt]
\cline{2-2}\cline{5-5}
(0,9,8,A$_{22}$)&\ds(0,37,12,A$_{2}$)&(0,81,20,A$_{\infty}$)&(1,18,9,A$_{9}$)&\ds(1,90,19,A$_{3}$)&(2,35,20,A$_{20}$)\\[1pt]
\cline{3-3}\cline{5-6}
(0,9,8,A$_{\infty}$)&(0,37,20,A$_{1}$)&\ds(0,85,14,A$_{0}$)&(1,18,19,-A$_{1}$)&\ds(1,92,17,-A$_{12}$)&\ds(2,39,10,A$_{0}$)\\[1pt]
\cline{4-4}
(0,9,10,A$_{24}$)&(0,37,20,-A$_{1}$)&(0,85,16,A$_{4}$)&(1,20,9,A$_{1}$)&(1,92,19,A$_{7}$)&(2,39,12,A$_{4}$)\\[1pt]
\cline{1-2}\cline{5-5}
\ds(0,13,6,A$_{0}$)&(0,40,14,A$_{19}$)&(0,85,18,A$_{15}$)&(1,20,9,A$_{3}$)&\ds(1,98,15,A$_{0}$)&(2,39,14,A$_{21}$)\\[1pt]
\cline{6-6}
(0,13,8,A$_{1}$)&(0,40,14,-A$_{19}$)&(0,85,20,A$_{2}$)&(1,20,11,A$_{7}$)&(1,98,17,A$_{4}$)&\ds(2,43,14,A$_{1}$)\\[1pt]
\cline{2-2}\cline{6-6}
(0,13,8,-A$_{1}$)&\ds(0,41,10,A$_{0}$)&(0,85,20,-A$_{2}$)&(1,20,11,A$_{\infty}$)&(1,98,19,A$_{5}$)&\ds(2,47,14,A$_{6}$)\\[1pt]
\cline{3-5}
(0,13,8,A$_{4}$)&(0,41,12,A$_{4}$)&(0,97,18,A$_{5}$)&\ds(1,24,13,A$_{6}$)&\ds(1,108,19,-A$_{5}$)&(2,47,14,A$_{21}$)\\[1pt]
\cline{4-5}
(0,13,12,A$_{6}$)&(0,41,14,A$_{2}$)&(0,97,18,-A$_{5}$)&(1,26,11,A$_{1}$)&\ds(1,122,19,A$_{5}$)&(2,47,16,A$_{21}$)\\[1pt]
\cline{5-6}
(0,13,12,-A$_{6}$)&(0,41,14,-A$_{2}$)&(0,97,20,A$_{2}$)&(1,26,11,A$_{9}$)&\ds(1,128,17,A$_{0}$)&\ds(2,48,14,A$_{3}$)\\[1pt]
\cline{1-2} \cline{4-4}
\ds(0,16,8,A$_{3}$)&(0,45,14,A$_{11}$)&(0,97,20,-A$_{2}$)&\ds(1,30,11,A$_{3}$)&(1,128,19,A$_{4}$)&(2,48,16,A$_{\infty}$)\\[1pt]
\cline{3-3}\cline{5-6}
(0,16,10,A$_{22}$)&(0,45,14,A$_{18}$)&\ds(0,100,18,A$_{12}$)&(1,30,13,A$_{\infty}$)&\ds(1,162,19,A$_{0}$)&(2,51,14,A$_{7}$)\\[1pt]
\cline{4-5}
(0,16,10,A$_{\infty}$)&(0,45,14,-A$_{18}$)&(0,100,20,A$_{3}$)&\ds(1,32,9,A$_{0}$)&\ds(2,7,6,A$_{20}$)&(2,51,14,A$_{14}$)\\[1pt]
\cline{2-3}\cline{5-5}
(0,16,12,A$_{23}$)&(0,49,14,A$_{2}$)&\ds(0,101,20,A$_{2}$)&(1,32,11,A$_{4}$)&\ds(2,8,6,A$_{3}$)&(2,51,16,A$_{1}$)\\[1pt]
\cline{1-1}\cline{3-3}\cline{6-6}
\ds(0,17,8,A$_{2}$)&(0,49,14,-A$_{2}$)&\ds(0,109,18,A$_{12}$)&(1,32,13,A$_{1}$)&(2,8,8,A$_{\infty}$)&\ds(2,55,18,A$_{21}$)\\[1pt]
\cline{5-6}
(0,17,10,A$_{1}$)&(0,49,14,A$_{3}$)&(0,109,20,A$_{5}$)&(1,32,13,A$_{25}$)&\ds(2,11,6,A$_{0}$)&\ds(2,59,12,A$_{0}$)\\[1pt]
\cline{4-4}
(0,17,10,-A$_{1}$)&(0,49,16,A$_{2}$)&(0,109,20,-A$_{5}$)&\ds(1,34,13,A$_{9}$)&(2,11,8,A$_{4}$)&(2,59,14,A$_{4}$)\\[1pt]
\cline{3-4}
(0,17,10,A$_{16}$)&(0,49,16,-A$_{2}$)&\ds(0,113,16,A$_{0}$)&\ds(1,36,13,A$_{6}$)&(2,11,8,A$_{20}$)&(2,59,16,A$_{21}$)\\[1pt]
\cline{1-1}\cline{4-5}
\ds(0,20,10,A$_{17}$)&(0,49,16,A$_{22}$)&(0,113,18,A$_{4}$)&(1,38,13,A$_{7}$)&(2,15,8,A$_{3}$)&(2,59,18,A$_{1}$)\\[1pt]
\cline{1-1}\cline{3-3}\cline{6-6}
(0,21,12,A$_{1}$)&(0,49,16,A$_{\infty}$)&(0,136,20,A$_{5}$)&(1,38,13,-A$_{19}$)&(2,15,8,A$_{21}$)&(2,63,16,A$_{3}$)\\[1pt]
\cline{2-2}
(0,21,12,-A$_{1}$)&\ds(0,52,14,A$_{11}$)&(0,136,20,-A$_{5}$)&(1,38,15,A$_{1}$)&(2,15,10,A$_{20}$)&(2,63,16,A$_{21}$)\\[1pt]
\cline{1-4}
\ds(0,25,8,A$_{0}$)&(0,57,18,A$_{2}$)&\ds(0,145,18,A$_{0}$)&\ds(1,42,13,A$_{3}$)&(2,15,10,A$_{\infty}$)&(2,63,18,A$_{\infty}$)\\[1pt]
\cline{5-5}
(0,25,10,A$_{2}$)&(0,57,18,-A$_{2}$)&(0,145,20,A$_{4}$)&(1,42,15,A$_{9}$)&\ds(2,19,8,A$_{1}$)&(2,63,20,A$_{21}$)\\[1pt]
\cline{2-3}\cline{6-6}
(0,25,10,-A$_{2}$)&\ds(0,61,12,A$_{0}$)&\ds(0,181,20,A$_{0}$)&(1,42,15,A$_{\infty}$)&(2,19,10,A$_{7}$)&\ds(2,67,16,A$_{7}$)\\[1pt]
\cline{3-4}
(0,25,10,A$_{3}$)&(0,61,14,A$_{4}$)&\ds(1,4,5,-A$_{1}$)&\ds(1,44,13,A$_{18}$)&(2,19,12,A$_{20}$)&(2,67,20,A$_{1}$)\\[1pt]
\cline{3-3}\cline{5-6}
(0,25,10,A$_{4}$)&(0,61,16,A$_{2}$)&\ds(1,6,5,A$_{3}$)&(1,44,17,A$_{1}$)&\ds(2,20,10,-A$_{8}$)&\ds(2,71,18,A$_{21}$)\\[1pt]
\cline{4-6}
(0,25,12,A$_{6}$)&(0,61,16,-A$_{2}$)&(1,6,7,-A$_{1}$)&\ds(1,50,11,A$_{0}$)&\ds(2,23,8,A$_{0}$)&\ds(2,75,16,A$_{5}$)\\[1pt]
\cline{2-2}\cline{6-6}
(0,25,12,-A$_{6}$)&\ds(0,64,16,A$_{3}$)&(1,6,7,A$_{\infty}$)&(1,50,13,A$_{4}$)&(2,23,10,A$_{4}$)&\ds(2,79,18,A$_{21}$)\\[1pt]
\cline{3-3}\cline{6-6}
(0,25,12,A$_{22}$)&(0,64,18,A$_{22}$)&\ds(1,8,5,A$_{0}$)&(1,50,17,A$_{9}$)&(2,23,10,A$_{21}$)&\ds(2,80,18,A$_{3}$)\\[1pt]  
(0,25,12,A$_{\infty}$)&(0,64,18,A$_{\infty}$)&(1,8,7,A$_{4}$)&(1,50,19,A$_{1}$)&(2,23,14,A$_{20}$)&(2,80,20,A$_{\infty}$)\\[1pt]
\cline{2-2}\cline{4-6}
(0,25,14,A$_{1}$)&\ds(0,65,16,A$_{2}$)&(1,8,9,-A$_{1}$)&(1,56,15,A$_{3}$)&\ds(2,24,10,A$_{3}$)&\ds(2,83,14,A$_{0}$)\\[1pt]
\cline{3-3}
(0,25,14,-A$_{1}$)&(0,65,20,A$_{2}$)&\ds(1,10,7,A$_{9}$)&(1,56,15,A$_{7}$)&(2,24,12,A$_{\infty}$)&(2,83,16,A$_{4}$)\\[1pt]
\cline{1-1}\cline{5-5}
(0,29,16,A$_{1}$)&(0,65,20,-A$_{2}$)&(1,10,11,-A$_{1}$)&(1,56,17,A$_{\infty}$)&\ds(2,27,10,A$_{1}$)&(2,83,18,A$_{7}$)\\[1pt]
\cline{2-4}
\multicolumn{6}{|l|}~\\[1pt]
\hline
\end{tabular}
\\[1pt]
\caption{quadruples $(a,b,n,$matrix$)$ with $n\leq 20$}
\\
\begin{tabular}{|l|l|l|l|l|l|}
\hline
\multicolumn{6}{|l|}~\\[1pt]
\cline{2-2}\cline{3-3}\cline{6-6}
(2,83,20,A$_{21}$)
&\ds(3,58,15,A$_{6}$)
&\ds(4,60,16,A$_{3}$)
&(5,60,15,A$_{4}$)&(6,72,20,A$_{\infty}$)
&(7,104,17,A$_{0}$)\\[1pt]
\cline{1-2}\cline{4-5}
\ds(2,87,18,A$_{5}$)&\ds(3,64,15,A$_{5}$)
&(4,60,18,A$_{\infty}$)
&\ds(5,62,15,A$_{1}$)
&\ds(6,79,18,A$_{6}$)&(7,104,19,A$_{4}$)\\[1pt]
\cline{1-1}\cline{3-4}\cline{6-6}
\ds(2,95,20,A$_{21}$)&(3,64,17,A$_{1}$)
&\ds(4,61,20,A$_{20}$)&\ds(5,66,15,A$_{5}$)
&(6,79,20,A$_{20}$)&\ds(7,138,19,A$_{0}$)\\[1pt]
\cline{1-3}\cline{5-6}
(2,99,20,A$_{3}$)&\ds(3,68,13,A$_{0}$)
&(4,65,16,A$_{1}$)&(5,66,17,A$_{3}$)
&\ds(6,87,18,A$_{1}$)&\ds(8,20,12,A$_{3}$)\\[1pt]
\cline{5-5}
(2,99,20,A$_{7}$)&(3,68,15,A$_{4}$)
&(4,65,16,A$_{5}$)&(5,66,19,A$_{20}$)
&\ds(6,91,18,A$_{6}$)&(8,20,14,A$_{\infty}$)\\[1pt]
\cline{2-3}\cline{6-6}
(2,99,20,A$_{21}$)&\ds(3,70,17,A$_{3}$)
&\ds(4,69,16,A$_{6}$)&(5,66,19,A$_{\infty}$)
&(6,91,20,A$_{3}$)&\ds(8,21,12,A$_{5}$)\\[1pt]
\cline{1-1}\cline{3-6}
\ds(2,108,18,A$_{5}$)&(3,70,19,A$_{\infty}$)
&\ds(4,77,14,A$_{0}$)&\ds(5,68,17,A$_{6}$)
&\ds(6,95,16,A$_{0}$)&\ds(8,24,12,A$_{5}$)\\[1pt]
\cline{1-2}\cline{4-4}\cline{6-6}
\ds(2,111,16,A$_{0}$)&\ds(3,74,19,A$_{1}$)
&(4,77,16,A$_{4}$)&\ds(5,76,17,A$_{1}$)
&(6,95,18,A$_{4}$)&\ds(8,25,12,A$_{20}$)\\[1pt]
\cline{2-2}\cline{4-6}
(2,111,18,A$_{4}$)&\ds(3,76,17,A$_{5}$)
&(4,77,18,A$_{1}$)&\ds(5,80,17,A$_{6}$)
&\ds(6,103,20,A$_{1}$)&\ds(8,29,12,A$_{0}$)\\[1pt]
\cline{1-2}\cline{4-5}
\ds(2,119,20,-A$_{5}$)&\ds(3,88,19,A$_{3}$)
&(4,77,18,A$_{3}$)&\ds(5,84,19,A$_{3}$)
&\ds(6,127,18,A$_{0}$)&(8,29,14,A$_{4}$)\\[1pt]
\cline{1-2}\cline{4-4}\cline{6-6}
\ds(2,143,18,A$_{0}$)&\ds(3,94,15,A$_{0}$)
&(4,77,20,A$_{\infty}$)&\ds(5,86,15,A$_{0}$)
&(6,127,20,A$_{4}$)&\ds(8,33,14,A$_{3}$)\\[1pt]
\cline{3-3}\cline{5-5}
(2,143,20,A$_{4}$)&(3,94,17,A$_{4}$)
&\ds(4,80,16,A$_{5}$)&(5,86,17,A$_{4}$)
&\ds(6,163,20,A$_{0}$)&(8,33,16,A$_{\infty}$)\\[1pt]
\cline{1-1}\cline{3-6}
\ds(2,179,20,A$_{0}$)&(3,94,17,A$_{5}$)
&\ds(4,89,20,A$_{1}$)&\ds(5,90,19,A$_{1}$)
&\ds(7,18,11,A$_{3}$)&\ds(8,41,14,A$_{20}$)\\[1pt]
\cline{1-4}\cline{6-6}
(3,10,7,A$_{3}$)&\ds(3,124,17,A$_{0}$)
&\ds(4,96,20,A$_{3}$)&\ds(5,116,17,A$_{0}$)
&(7,18,13,A$_{\infty}$)&\ds(8,48,16,A$_{3}$)\\[1pt]
\cline{3-3}\cline{5-5}
(3,10,7,A$_{20}$)&(3,124,19,A$_{4}$)
&\ds(4,105,16,A$_{0}$)&(5,116,19,A$_{4}$)
&\ds(7,20,11,A$_{5}$)&(8,48,18,A$_{\infty}$)\\[1pt]
\cline{2-2}\cline{4-6}
(3,10,9,A$_{\infty}$)&\ds(3,158,19,A$_{0}$)
&(4,105,18,A$_{4}$)&\ds(5,150,19,A$_{0}$)
&\ds(7,22,11,A$_{20}$)&\ds(8,49,14,A$_{1}$)\\[1pt]
\cline{1-6}
\ds(3,14,7,A$_{0}$)&\ds(4,12,8,A$_{3}$)
&\ds(4,137,18,A$_{0}$)&\ds(6,16,10,A$_{3}$)
&\ds(7,26,11,A$_{0}$)&\ds(8,53,14,A$_{0}$)\\[1pt]
(3,14,9,A$_{4}$)&(4,12,10,A$_{\infty}$)&(4,137,20,A$_{4}$)&(6,16,12,A$_{\infty}$)&(7,26,13,A$_{4}$)&(8,53,16,A$_{4}$)\\[1pt]
\cline{1-6}
\ds(3,16,9,A$_{20}$)&\ds(4,13,8,A$_{20}$)
&\ds(4,173,20,A$_{0}$)&\ds(6,23,10,A$_{0}$)
&\ds(7,30,13,A$_{3}$)&\ds(8,56,16,-A$_{8}$)\\[1pt]
\cline{1-3}\cline{6-6}
\ds(3,18,9,A$_{3}$)&\ds(4,17,8,A$_{0}$)
&\ds(5,14,9,A$_{3}$)&(6,23,12,A$_{4}$)
&(7,30,15,A$_{\infty}$)&\ds(8,57,16,A$_{20}$)\\[1pt]
\cline{4-6}
(3,18,11,A$_{\infty}$)&(4,17,10,A$_{4}$)
&(5,14,11,A$_{\infty}$)&\ds(6,27,12,A$_{3}$)
&\ds(7,32,13,A$_{5}$)&\ds(8,65,18,A$_{3}$)\\[1pt]
\cline{1-3}\cline{5-5}
\ds(3,22,11,A$_{20}$)&(4,21,10,A$_{3}$)
&\ds(5,16,9,A$_{20}$)&(6,27,14,A$_{\infty}$)
&\ds(7,36,13,A$_{20}$)&(8,65,20,A$_{\infty}$)\\[1pt]
\cline{1-1}\cline{3-6}
\ds(3,24,9,A$_{1}$)&(4,21,10,A$_{20}$)
&\ds(5,20,9,A$_{0}$)&(6,31,12,A$_{5}$)
&\ds(7,38,13,A$_{5}$)&\ds(8,69,16,A$_{1}$)\\[1pt]
\cline{1-1}\cline{5-6}
\ds(3,26,11,-A$_{8}$)&(4,21,12,A$_{\infty}$)
&(5,20,11,A$_{4}$)&(6,31,12,A$_{20}$)
&\ds(7,44,13,A$_{1}$)&\ds(8,73,18,A$_{20}$)\\[1pt]
\cline{1-4}\cline{6-6}
\ds(3,28,9,A$_{0}$)&\ds(4,29,10,A$_{1}$)
&\ds(5,24,11,A$_{3}$)&\ds(6,39,12,A$_{1}$)
&(7,44,15,A$_{3}$)&\ds(8,81,16,A$_{0}$)\\[1pt]
\cline{4-4}
(3,28,11,A$_{3}$)&(4,29,12,A$_{20}$)
&(5,24,13,A$_{\infty}$)&\ds(6,40,14,A$_{3}$)
&(7,44,17,A$_{\infty}$)&(8,81,18,A$_{4}$)\\[1pt]
\cline{2-3}\cline{5-6}
(3,28,11,A$_{4}$)&(4,32,12,A$_{3}$)
&\ds(5,26,11,A$_{20}$)&(6,40,16,A$_{\infty}$)
&\ds(7,48,13,A$_{0}$)&\ds(8,84,20,A$_{3}$)\\[1pt]
\cline{3-4}\cline{6-6}
(3,28,13,A$_{20}$)&(4,32,12,-A$_{8}$)
&\ds(5,34,11,A$_{1}$)&\ds(6,43,12,A$_{0}$)
&(7,48,15,A$_{4}$)&\ds(8,89,18,A$_{1}$)\\[1pt]
\cline{3-3}\cline{5-5}
(3,28,13,A$_{\infty}$)&(4,32,14,A$_{\infty}$)&(5,36,13,A$_{3}$)&(6,43,14,A$_{4}$)&(7,50,15,-A$_{8}$)&(8,89,20,A$_{20}$)\\[1pt]
\cline{1-2}\cline{6-6}
\ds(3,34,11,A$_{1}$)&\ds(4,33,10,A$_{0}$)
&(5,36,13,A$_{20}$)&(6,43,14,A$_{5}$)
&(7,50,15,A$_{20}$)&\ds(8,101,20,A$_{6}$)\\[1pt]
\cline{5-6}
(3,34,15,A$_{20}$)&(4,33,12,A$_{4}$)
&(5,36,15,A$_{\infty}$)&(6,43,14,A$_{20}$)
&\ds(7,60,17,A$_{3}$)&\ds(8,109,20,A$_{1}$)\\[1pt]
\cline{1-4}\cline{6-6}
\ds(3,40,13,A$_{3}$)&\ds(4,37,14,A$_{20}$)
&\ds(5,38,11,A$_{0}$)&\ds(6,44,14,-A$_{8}$)
&(7,60,19,A$_{\infty}$)&\ds(8,113,18,A$_{0}$)\\[1pt]
\cline{2-2}\cline{4-5}
(3,40,15,A$_{\infty}$)&\ds(4,41,12,A$_{1}$)
&(5,38,13,A$_{4}$)&\ds(6,52,14,A$_{5}$)
&\ds(7,62,15,A$_{1}$)&(8,113,20,A$_{4}$)\\[1pt]
\cline{2-2}\cline{4-5}
(3,40,17,A$_{20}$)&\ds(4,45,14,A$_{3}$)
&(5,38,13,-A$_{8}$)&\ds(6,55,14,A$_{1}$)
&\ds(7,64,17,A$_{20}$)&(8,113,20,A$_{6}$)\\[1pt]
\cline{1-1}\cline{3-3}\cline{5-6}
\ds(3,44,13,A$_{1}$)&(4,45,16,A$_{20}$)
&\ds(5,42,13,A$_{5}$)&(6,55,16,A$_{3}$)
&\ds(7,74,15,A$_{1}$)&\ds(8,149,20,A$_{0}$)\\[1pt]
\cline{1-1}\cline{3-3}\cline{6-6}
\ds(3,46,11,A$_{0}$)&(4,45,16,A$_{\infty}$)
&\ds(5,46,15,A$_{20}$)&(6,55,16,A$_{20}$)
&(7,74,17,A$_{4}$)&\ds(9,22,13,A$_{3}$)\\[1pt]
\cline{2-3}\cline{5-5}
(3,46,13,A$_{4}$)&\ds(4,53,12,A$_{0}$)
&\ds(5,48,13,A$_{1}$)&(6,55,18,A$_{\infty}$)
&(7,78,19,A$_{3}$)&(9,22,15,A$_{\infty}$)\\[1pt]
\cline{3-4}\cline{6-6}
(3,46,15,A$_{6}$)&(4,53,14,A$_{1}$)
&\ds(5,50,15,A$_{3}$)&\ds(6,67,14,A$_{0}$)
&(7,78,19,A$_{20}$)&\ds(9,28,13,A$_{20}$)\\[1pt]
\cline{5-6}
(3,46,19,A$_{20}$)&(4,53,14,A$_{4}$)
&(5,50,17,A$_{\infty}$)&(6,67,16,A$_{4}$)
&\ds(7,80,17,A$_{1}$)&\ds(9,32,13,A$_{0}$)\\[1pt]
\cline{1-1}\cline{3-3}\cline{5-5}
(3,54,15,A$_{1}$)&(4,53,14,A$_{5}$)
&\ds(5,54,15,A$_{5}$)&(6,67,18,A$_{20}$)
&\ds(7,90,19,A$_{6}$)&(9,32,15,A$_{4}$)\\[1pt]
\cline{3-6}
(3,54,15,A$_{3}$)&(4,53,18,A$_{20}$)
&\ds(5,56,17,A$_{20}$)&\ds(6,71,16,A$_{1}$)
&\ds(7,98,19,A$_{1}$)&\ds(9,36,15,A$_{3}$)\\[1pt]
\cline{2-5}
(3,54,17,A$_{\infty}$)&\ds(4,57,16,A$_{6}$)
&\ds(5,60,13,A$_{0}$)&\ds(6,72,18,A$_{3}$)
&\ds(7,102,19,A$_{6}$)&(9,36,17,A$_{\infty}$)\\[1pt]
\cline{1-1}\cline{2-2}\cline{5-5}\cline{6-6}
 \multicolumn{6}{|l|}~\\[1pt]
\hline
\end{tabular}
\\[1pt]
\caption{quadruples $(a,b,n,$matrix$)$ with $n\leq 20$} 
\\[1pt]
\begin{tabular}{|l|l|l|l|l|l|}
\hline
\multicolumn{6}{|l|}~\\[1pt]
\cline{1-1}\cline{3-5}
\ds(9,46,15,A$_{20}$)& (10,24,16,A$_{\infty}$)
&\ds(10,91,20,A$_{20}$)&\ds(11,74,19,-A$_{8}$)
&\ds(12,85,20,A$_{20}$)&(14,47,20,A$_{4}$) \\[1pt]
\cline{1-6}
\ds(9,52,17,A$_{3}$)&\ds(10,31,14,A$_{20}$)
&\ds(10,95,18,A$_{0}$)&\ds(11,78,19,A$_{20}$)
&\ds(12,97,20,A$_{1}$)&\ds(14,51,20,A$_{3}$) \\[1pt]
\cline{2-2}\cline{4-6}          
(9,52,19,A$_{\infty}$)&\ds(10,35,14,A$_{0}$)
&\ds(10,95,20,A$_{4}$)&\ds(11,90,19,A$_{1}$)&  \ds(12,109,20,A$_{0}$)&\ds(14,71,20,A$_{20}$)\\[1pt]
\cline{1-1}\cline{3-6}
\ds(9,54,15,A$_{1}$)&(10,35,16,A$_{4}$)
&\ds(10,107,20,A$_{1}$)&\ds(11,102,19,A$_{0}$)
&\ds(13,30,17,A$_{3}$)&\ds(14,79,20,A$_{1}$)\\[1pt]
\cline{1-4}\cline{6-6}
\ds(9,58,15,A$_{0}$)&\ds(10,39,16,A$_{3}$)
&\ds(10,131,20,A$_{0}$)&\ds(12,28,16,A$_{3}$)
&(13,30,19,A$_{\infty}$)&\ds(14,83,20,A$_{0}$) \\[1pt]
\cline{3-3}\cline{5-6}
(9,58,17,A$_{4}$)&(10,39,18,A$_{\infty}$)
&\ds(11,26,15,A$_{3}$)&(12,28,18,A$_{\infty}$)
&\ds(13,40,17,A$_{20}$)&\ds(15,34,19,A$_{3}$) \\[1pt]
\cline{1-2}\cline{4-6}          
\ds(9,62,17,-A$_{8}$)&\ds(10,51,16,A$_{20}$)
&(11,26,17,A$_{\infty}$)&\ds(12,37,16,A$_{20}$)
&\ds(13,44,17,A$_{0}$)&\ds(15,46,19,A$_{20}$) \\[1pt]
\cline{1-4}\cline{6-6}          
\ds(9,64,17,A$_{20}$)&\ds(10,56,18,A$_{3}$)
&\ds(11,34,15,A$_{20}$)&\ds(12,41,16,A$_{0}$)
&(13,44,19,A$_{4}$)&\ds(15,50,19,A$_{0}$) \\[1pt]
\cline{1-1}\cline{3-3}\cline{5-6}          
\ds(9,70,19,A$_{3}$)&(10,56,20,A$_{\infty}$)
&\ds(11,38,15,A$_{0}$)&\ds(12,41,18,A$_{4}$)
&\ds(13,48,19,A$_{3}$)&\ds(16,36,20,A$_{3}$) \\[1pt]  
\cline{1-2}\cline{4-6}                   
\ds(9,76,17,A$_{1}$)&\ds(10,59,16,A$_{1}$)
&(11,38,17,A$_{4}$)&\ds(12,45,18,A$_{3}$)
&\ds(13,66,19,A$_{20}$)&\ds(16,49,20,A$_{20}$) \\[1pt] 
\cline{1-3}\cline{5-6}                   
\ds(9,82,19,A$_{20}$)&\ds(10,63,16,A$_{0}$)
&\ds(11,42,17,A$_{3}$)&(12,45,20,A$_{\infty}$)
&\ds(13,74,19,A$_{1}$)&\ds(16,53,20,A$_{0}$) \\[1pt]           
\cline{1-1}\cline{4-6}
\ds(9,88,17,A$_{0}$)&(10,63,18,A$_{4}$)
&(11,42,19,A$_{\infty}$)&\ds(12,61,18,A$_{20}$)
&\ds(13,78,19,A$_{0}$)& \\[1pt]
\cline{2-5}            
(9,88,19,A$_{4}$)&\ds(10,68,18,-A$_{8}$)
&\ds(11,56,17,A$_{20}$)&\ds(12,64,20,A$_{3}$)
&\ds(14,32,18,A$_{3}$)& \\[1pt]  
\cline{1-4}           
\ds(9,98,19,A$_{1}$)&\ds(10,71,18,A$_{20}$)
&\ds(11,60,19,A$_{3}$)&\ds(12,69,18,A$_{1}$)
&(14,32,20,A$_{\infty}$)& \\[1pt]
\cline{1-5}            
\ds(9,122,19,A$_{0}$)&\ds(10,75,20,A$_{3}$)
&\ds(11,64,17,A$_{1}$)&\ds(12,73,18,A$_{0}$)
&\ds(14,43,18,A$_{20}$)& \\[1pt]
\cline{1-5}                 
\ds(10,24,14,A$_{3}$)&\ds(10,83,18,A$_{1}$)
&\ds(11,68,17,A$_{0}$)&\ds(12,80,20,-A$_{8}$)
&\ds(14,47,18,A$_{0}$)&  \\[1pt]                
\cline{2-4}
\multicolumn{6}{|l|}~\\[1pt] 
\hline
\end{tabular}
\\[1pt]
\caption{quadruples $(a,b,n,$matrix$)$ with $n\leq 20$}\label{list}
\end{longtable}
\end{center}

\section{Spectral symmetry}\label{symmetry}

If a signed graph $G^\sigma$ in $\cal G'$ 
has a characteristic triple $(a,b,n)$ with $a=0$, then its spectrum is symmetric, which means that it is invariant under multiplication by $-1$.
In this case the negative $G^{-\sigma}$ is cospectral with $G^\sigma$.
When $G^{-\sigma}$ is switching isomorphic to $G^\sigma$,
we say that the signed graphs are {\em sign-symmetric}. 
The following result follows by straightforward verification.

\begin{prp}
A signed graph $G^\sigma\in\cal G'$ with no isolated edges is sign-symmetric 
if and only if its spectrum is symmetric and $G^{\sigma}$ nor its negative $G^{-\sigma}$ is
switching isomorphic to one of the following cases:
$A_1$,
$A_2$ unless $m=\ell$,
$A_5$,
$A_6$, 
$A_8$, 
$A_{13}$,
$A_{18}$,
$A_{19}$.
\end{prp}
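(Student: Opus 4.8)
The plan is to treat the two directions separately and to reduce everything to a finite check against Table~\ref{classification}. The forward direction is immediate: if $G^\sigma$ is sign-symmetric then $G^\sigma$ and $G^{-\sigma}$ are switching isomorphic, hence cospectral, and since the spectrum of $-A$ is the negative of the spectrum of $A$, the spectrum of $G^\sigma$ is symmetric; for signed graphs in $\mathcal{G}'$ this is exactly the condition $a=0$ in the characteristic triple. So the real task is to decide, among the $G^\sigma\in\mathcal{G}'$ with $a=0$, which are sign-symmetric. Scanning the last column of Table~\ref{classification}, a triple with $a=0$ can occur only for the types $A_0(m,m)$, $A_1(2,\ell)$, $A_2$, $A_3(m,m)$, $A_4(m,m)$, $A_\infty(m,m)$, for special parameter values of $A_5,A_6,A_8,A_{12},A_{13},A_{15},A_{18},A_{19}$, and for $A_{10},A_{11},A_{16},A_{17},A_{22},A_{23},A_{24}$; these are the only signed graphs I need to examine.

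The common tool for both remaining directions is that two signed graphs with the same underlying graph are switching equivalent if and only if every cycle carries the same sign, while taking the negative multiplies the sign of a cycle of length $\ell$ by $(-1)^\ell$. Hence $G^\sigma$ is sign-symmetric precisely when the underlying graph $G$ has an automorphism $\phi$ with $\mathrm{sign}_\sigma(\phi(C))=(-1)^{|C|}\mathrm{sign}_\sigma(C)$ for every cycle $C$; only the odd cycles impose a condition. In particular a bipartite signed graph is always sign-symmetric (take $\phi$ the identity), which settles the bipartite types on the list, namely $A_{22}$, $A_{23}$ and $A_{24}$.

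For the non-bipartite types that the proposition asserts to be sign-symmetric I will exhibit an explicit signed permutation matrix $Q$ (a $\pm1$ monomial matrix) with $QAQ^\top=-A$. For the equal-block families this is a swap of the two equal-sized diagonal blocks followed by a diagonal switching on one block: for $A_0(m,m)$, $A_2(m,m)$, $A_3(m,m)$ and $A_\infty(m,m)$ interchanging the two $m$-sets (and, where needed, switching on one of them) sends $A$ to $-A$, and $A_4(m,m)$ works the same way after also swapping its two order-$2$ blocks. The remaining sign-symmetric cases ($A_{10},A_{11},A_{12},A_{15},A_{16},A_{17}$ for the relevant parameters) are finite in number and are confirmed by the same kind of explicit $Q$, or simply by direct computation.

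The heart of the proof is showing that the listed exceptions are not sign-symmetric, i.e.\ that no admissible $\phi$ exists. Here the naive invariant ``number of positive minus number of negative triangles'' is useless, because it vanishes in the infinite families: one checks that $A_1(2,\ell)$ has $2\ell$ positive and $2\ell$ negative triangles, and $A_2(m,\ell)$ has $2m\ell$ of each. The argument must instead use the degree sequence of $G$ to pin $\phi$ down. In $A_1(2,\ell)$ the two vertices of the $J-I_2$ block are the unique vertices of maximum degree, so $\phi$ fixes that pair setwise; but then $\phi$ carries the positive triangles built on the edge between them to positive triangles, whereas sign-symmetry would force them onto negative triangles, a contradiction. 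For $A_2(m,\ell)$ with $m\neq\ell$ the two diagonal blocks have different vertex degrees, so $\phi$ preserves each block, and then the positive triangles (whose matching edge lies in the $R_{2m}$ block) cannot be mapped to the negative triangles (matching edge in the $-R_{2\ell}$ block). The sporadic exceptions $A_5,A_6,A_8,A_{13},A_{18},A_{19}$ occur only for finitely many parameter values, and each is dispatched by the same recipe --- fix the coarse block structure using degrees, then follow a single odd cycle to a sign clash --- or, failing a uniform argument, by direct enumeration of the (few) automorphisms. The main obstacle is exactly this last step: once the triangle count fails to separate $G^\sigma$ from $G^{-\sigma}$, one must extract enough rigidity from $\mathrm{Aut}(G)$ to guarantee a sign conflict on some odd cycle.
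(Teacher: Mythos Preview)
The paper's own proof is literally the single sentence ``follows by straightforward verification'', so there is no detailed argument to compare against.  Your proposal is a correct and well-organized blueprint for that verification, and it is essentially what the authors must have in mind.  Your enumeration of the types with $a=0$ from Table~\ref{classification} is accurate; the cycle-sign criterion (an automorphism $\phi$ with $\mathrm{sign}_\sigma(\phi(C))=(-1)^{|C|}\mathrm{sign}_\sigma(C)$) is the right tool; the bipartite cases $A_{22},A_{23},A_{24}$ are handled for free; and the explicit signed permutations you describe for $A_0(m,m)$, $A_2(m,m)$, $A_3(m,m)$, $A_4(m,m)$, $A_\infty(m,m)$ do send $A$ to $-A$.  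Your non-sign-symmetry arguments for the two infinite families are clean and correct: in $A_1(2,\ell)$ the degree sequence pins the pair $\{1,2\}$, and every triangle through that edge is positive while sign-symmetry would force it to a negative one; in $A_2(m,\ell)$ with $m\neq\ell$ the degree sequence separates the two blocks, and positive versus negative triangles are distinguished by which block contains the matching edge.  The remaining sporadic exceptions and confirmations are finitely many small matrices, and your appeal to direct computation there is exactly the level of detail the paper itself offers.
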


Next we determine which signed graphs with symmetric spectrum are DSS.
It is clear that signed graphs with symmetric spectrum which are not sign symmetric, cannot be DSS.

\begin{thm}
Suppose $G^\sigma$ is a signed graph in $\cal G$ with a symmetric spectrum and no isolated edges.
Then $G^\sigma$ is DSS if and only if $G^{\sigma}$ or its nagative $G^{-\sigma}$ is switching isomorphic to one of the following:
$A_0(m,m)$,
$A_2(m,m)$ unless $m^2$ is a triangular number,
$A_3(m,m)$ unless $m=8$, $m=9$, or $(m+1)^2$ is the sum of two consequtive squares,
$A_{11}(3,4)$,
$A_{11}(4,3)$,
$A_{12}(4,4,4,4)$,
$A_{12}(6,3,3,6)$,
$A_{12}(6,6,3,3)$,
$A_{15}(4,4,2,2)$,
$A_{17}$.
\end{thm}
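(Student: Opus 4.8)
Since a signed graph in $\mathcal{G}$ with symmetric spectrum all of whose eigenvalues are $\pm1$ is a union of isolated edges, the hypotheses force $G^\sigma$ into $\mathcal{G}'$ with characteristic triple $(0,b,n)$. The plan is to turn the DSS question into the combinatorial search of Section~3: by the analysis preceding Theorem~\ref{20}, such a $G^\sigma$ with no isolated edges is DSS if and only if it is sign-symmetric and is the unique signed graph in $\mathcal{G}'$, up to switching and negation, realizing a triple $(0,b,n')$ with $n'\le n$. Indeed a realizer with $n'<n$ produces a cospectral mate after adding $(n-n')/2$ isolated edges, a second realizer with $n'=n$ is a cospectral mate directly, and conversely sign-symmetry guarantees that $G^{-\sigma}$ is not itself a distinct mate. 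The first reduction is then immediate: the preceding proposition lets me discard every symmetric-spectrum graph switching isomorphic to $A_1,A_5,A_6,A_8,A_{13},A_{18},A_{19}$ or to $A_2(m,\ell)$ with $m\ne\ell$, since these are not sign-symmetric, and Proposition~\ref{cosp} removes $A_4(m,m)$, $A_{22}$ and $A_\infty(m,m)$, which carry explicit cospectral mates. What survives are the sign-symmetric families with $a=0$: the three infinite families $A_0(m,m)$, $A_2(m,m)$, $A_3(m,m)$, and the sporadic graphs $A_{10},A_{11},A_{12},A_{15},A_{16},A_{17},A_{23},A_{24}$.

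I would settle the sporadic candidates by direct inspection. Their triples are read from Table~\ref{classification} and located in the clustered Table~\ref{list}; by Theorem~\ref{20} each is DSS exactly when it carries a~$*$, i.e. when it is the first entry of its $(0,b)$-cluster and the only entry of its order. Running through the finitely many clusters then yields precisely the sporadic graphs named in the statement on the DSS side and rules out the others.

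The substance lies in the three infinite families, where I would solve, for each target value of $b$, the cross-family equations under the order constraint $n'\le n$. The relevant quadratics are $b=2m^2-2m+1$ for $A_0(m,m)$ of order $2m$, $b=4m^2+1$ for $A_2(m,m)$ of order $4m$, and $b=(m+1)^2$ for $A_3(m,m)$ of order $2m+2$; in each case $m\mapsto b$ is strictly increasing, so no family collides with itself. The decisive coincidences involve $A_0$, using $2k^2-2k+1=k^2+(k-1)^2$: an $A_0$-value equals $4m^2+1$ exactly when $k(k-1)/2=m^2$, i.e. when $m^2$ is triangular, and equals $(m+1)^2$ exactly when $(m+1)^2$ is a sum of two consecutive squares. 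In both situations the matching $A_0$-graph has strictly smaller order, so it is a genuine cospectral mate and $A_2(m,m)$, respectively $A_3(m,m)$, fails to be DSS; this recovers the two infinite exception conditions. The sporadic exceptions $m=8,9$ for $A_3$ come from $(m+1)^2=81=4\cdot5\cdot4+1$, whose mate is $A_2(5,4)$ together with $A_{12}(4,4,4,4)+K_2$, and from $(m+1)^2=100$, whose mate is $A_{12}(6,6,3,3)+K_2$.

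The real work is to show that no \emph{other} collision occurs. For $A_0(m,m)$ I would prove it is always DSS by bounding orders: any $A_2$- or $A_3$-graph sharing its $b$ has order strictly above $2m$, via AM--GM for $A_2$ and via $2m^2-2m+1=m^2+(m-1)^2\ge m^2$ for $A_3$, so no competitor of order $\le 2m$ exists; the families $A_1,A_4,A_{20},A_{21},A_{22},A_\infty$ either have $a\ne0$ or grow too fast, and the sporadic graphs have bounded $b$, leaving only a finite range of $m$ to check by hand. The same scheme drives $A_2(m,m)$ and $A_3(m,m)$: every family other than $A_0$ (and, for $A_3$, the already isolated $A_{12}$ values) either fails the order bound or forces a Pell-type equation such as $(m'+1)^2-(2m)^2=1$ with no admissible solution. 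I expect the hard part to be this exhaustive elimination for $A_3(m,m)$: because its target $(m+1)^2$ is itself a perfect square, it is a priori compatible with the perfect-square $b$-values of $A_{22}$ and $A_\infty$ and with the product form $4m'\ell'+1$ of $A_2$, so all of these must be controlled simultaneously, and the small-$m$ intrusions of families such as $A_1(2,\ell)$ must be pinned down so that the stated exception list is exactly complete.
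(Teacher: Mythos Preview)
Your plan matches the paper's proof almost step for step: discard the non-sign-symmetric cases via the preceding proposition, eliminate $A_4$, $A_{22}$, $A_\infty$ by Proposition~\ref{cosp}, dispose of the sporadic cases (and of $A_0(m,m)$, whose DSS property the paper simply cites from earlier work rather than re-proving via your order bounds) through Theorem~\ref{20}, and then handle $A_2(m,m)$ and $A_3(m,m)$ by searching Table~\ref{classification} for triples $(0,b,n')$ with $n'\le n$. The paper packages this last search as the inequalities $n'\le 2\sqrt{b-1}$ (for $A_2$) and $n'\le 2\sqrt b$ (for $A_3$) and claims the outcome ``by inspection''; your family-by-family equations are the same computation written out.

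Where your caution is well placed is the $A_2$--$A_3$ interaction you flag at the end. Carrying your plan through there actually shows that the theorem as stated is incomplete: for every even $m=2p\ge 4$ one has $4(p{+}1)p+1=(2p{+}1)^2=(m{+}1)^2$ and $2(p{+}1)+2p=2m+2$, so $A_2(p{+}1,p)$ shares the full characteristic triple of $A_3(m,m)$. The underlying graphs have different degree sequences ($A_2$ has $2p$ vertices of degree $2p+3$, while $A_3$ has two vertices of degree $4p+1$), so they are not switching isomorphic, and $A_3(m,m)$ fails to be DSS for every even $m\ge 4$; Table~\ref{list} already witnesses this at $(0,49,14)$ for $m=6$. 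The paper's ``by inspection'' step overlooks this family of collisions and the exception list for $A_3$ in the statement is correspondingly too short, so your programme cannot terminate in a proof of the theorem exactly as written---but the method is the right one and would yield the corrected statement.
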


\begin{proof}
If $G^\sigma$ has matrix $A_0(m,m)$, $A_4(m,m)$, $A_{22}(m)$, or $A_\infty(m,m)$, and if $n\leq 20$ the result follows from Proposition~\ref{cosp} and Theorem~\ref{20}.
Moreover, $A_1(2,\ell)$ and $A_2(m,\ell)$ with $m\neq\ell$ are not sign symmetric, and therefore not DSS.
Thus we only need to consider $A_2(m,m)$, and $A_3(m,m)$.

First we look at $A_2(m,m)$ with characteristic triple
$(0,4m^2+1,4m)=(0,b,2\sqrt{b-1})$.
To find the signed graphs cospectral with $A_2(m,m)$
we need to find the characteristic triples $(0,b,n)$, where $b-1$ is an even square and $n\leq 2\sqrt{b-1}$.
By inspection we only find $A_{0}(k,k)$ when $2k^2-2k+1=4m^2+1$, that is $m^2= k(k-1)/2$.
In all remaining cases $A_2(m,m)$ is DSS.

Next we consider $A_3(m,m)$ with characteristic  triple
$(0,(m+1)^2,2m+2)=(0,b,2\sqrt{b})$.
Now we need to find the characteristic triples $(0,b,n)$, where $b$ is a square and $n\leq 2\sqrt{b}$.
We find $A_{12}(4,4,4,4)$, $A_{12}(6,6,3,3)$, which correspond to $m=8$ and $m=9$ respectively, and 
$A_0(k,k)$ when $2k^2-2k+1=(m+1)^2$, that is $(m+1)^2=k^2+(k-1)^2$.
In all remaining cases $A_3(m,m)$ is DSS.
\end{proof}

Matrix $A_{22}(m)$ represents the complete bipartite graph $K_{m,m}$ with the edges of a perfect matching removed. 
It is also the bipartite double of the complete graph $K_m$ denoted by $K_m\times K_2$.
As an unsigned graph $K_m\times K_2$ is determined by its spectrum,
but when considered as a signed graph this is far from true.
Indeed, for every $m\geq 3$ $A_{22}(m)$ is cospectral and not switching isomorphic with
$A_\infty(m,m)$, but there is more:

\begin{thm}
A signed graph $G^\sigma$ is cospectral with $K_m\times K_2$ if and only if 
$G^\sigma$ is switching isomorphic with one of the following signed graphs:
$A_\infty(m,m)$, 
$A_3(m-2,m-2)+K_2$, 
$\pm A_2(k,\ell)+(m-k-\ell)K_2$ when $(m-1)^2=4k\ell+1$,
$A_0(k,k)+(m-k)K_2$ when $(m-1)^2=k^2+(k-1)^2$,
$A_4(k,k)+(m-k-2)K_2$ when $(m-1)^2=k^2+(k+1)^2$, 
$\pm A_6(4,3)$ when $m=6$,
$A_{10}(3,4)$ when $m=7$,
$A_{10}(4,3)$ when $m=7$, 
$A_{12}(4,4,4,4)+2K_2$ when $m=10$, 
$A_{12}(6,6,3,3)+2K_2$ when $m=11$.
\end{thm}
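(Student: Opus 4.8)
The plan is to run, with $m$ kept as a parameter, the same search through Table~\ref{classification} that produced Table~\ref{list}. Since $K_m\times K_2$ has adjacency matrix $A_{22}(m)$, its characteristic triple is $(0,(m-1)^2,2m)$. By the reduction preceding Table~\ref{list}, a signed graph in $\cal G$ is cospectral with $K_m\times K_2$ precisely when it equals $H+\beta K_2$ for some $H\in\cal G'$ with triple $(0,b,n)$ and some integer $\beta\ge0$ satisfying
\[
b=(m-1)^2,\qquad n+2\beta=2m .
\]
Thus $\beta=(2m-n)/2$ is forced, and feasibility ($\beta\ge0$) is the order bound $n\le 2m=2\sqrt{b}+2$, equivalently $b\ge(\tfrac n2-1)^2$. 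The whole problem therefore reduces to listing every matrix type in Table~\ref{classification} (together with its negative when the type is not sign-symmetric) whose parameters can be set so that $a=0$ and $b$ is the perfect square $(m-1)^2$, subject to this order bound.

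First I would treat the types that retain a free parameter once $a=0$ is imposed and for which the order bound holds identically, namely $A_\infty$, $A_3$, $A_0$, $A_4$ (all with $m=\ell$), $A_2$, and the target $A_{22}$ itself. For these the sole remaining requirement is that $b$ be a square. Now $A_{22}(k)$ and $A_\infty(k,k)$ have $b=(k-1)^2$ and $A_3(k,k)$ has $b=(k+1)^2$, all automatically squares; the first merely returns $K_m\times K_2$, the second two give $A_\infty(m,m)$ and $A_3(m-2,m-2)+K_2$ (both with $\beta$ as dictated, the former with $\beta=0$). For $A_0(k,k)$, $A_4(k,k)$ and $A_2(k,\ell)$ one has $b=k^2+(k-1)^2$, $b=k^2+(k+1)^2$ and $b=4k\ell+1$, so the square condition becomes exactly $(m-1)^2=k^2+(k-1)^2$, $(m-1)^2=k^2+(k+1)^2$ and $(m-1)^2=4k\ell+1$. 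By the sign-symmetry Proposition, $A_0$, $A_4$ and $A_2(k,k)$ are sign-symmetric and contribute one signed graph each, while $A_2(k,\ell)$ with $k\ne\ell$ is not, which is the origin of the $\pm A_2$ in the statement.

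Next I would sweep the types that collapse to finitely many triples once $a=0$ is imposed. Those with no $a=0$ instance at all ($A_7,A_9,A_{14},A_{20},A_{21},A_{25}$) drop out immediately. Among the others the perfect-square condition eliminates $A_5,A_8,A_{11},A_{13},A_{15},A_{16},A_{17},A_{18},A_{19}$, whose $a=0$ values of $b$ are not squares, and the order bound eliminates the square-$b$ impostors $A_{23}$ (triple $(0,16,12)$) and $A_{24}$ (triple $(0,9,10)$), whose orders exceed $2\sqrt b+2$. What remains is $A_6$ at its $a=0$ specialization (giving $m=6$), the pair $A_{10}(3,4),A_{10}(4,3)$ (giving $m=7$), and $A_{12}(4,4,4,4),A_{12}(6,6,3,3)$ (giving $m=10,11$); $A_6$ is not sign-symmetric, whence $\pm A_6$, whereas $A_{10}$ and $A_{12}$ are sign-symmetric and occur once each. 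Together with the infinite families of the previous step and the forced multiplicities $\beta=(2m-n)/2$, this assembles the list in the statement.

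I expect the genuine difficulty to lie not in any one computation but in establishing completeness, and in particular in two bookkeeping points. The first is sign symmetry: the negative must be adjoined for, and only for, the types the Proposition marks as non-sign-symmetric. The second concerns any type for which $a=0$ fails to pin all parameters and which is not among the infinite families above; the only such type is $A_1$, where $a=0$ forces the first parameter to $2$ yet leaves the second free. This family cannot simply be discarded: it must be carried through the order bound, which confines the free parameter to a short range, and then through the square condition, before one may conclude that nothing further arises. Verifying this sporadic case carefully is, I anticipate, the crux on which exhaustiveness of the list ultimately turns.
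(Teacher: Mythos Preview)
Your approach is exactly the paper's: hunt through Table~\ref{classification} for triples $(0,b,n)$ with $b$ a perfect square and $n\le 2\sqrt{b}+2$. The paper compresses this to ``by straightforward verification''; you have spelled out what that verification entails and carried it through for every type except $A_1$, which you correctly flag as the one case where $a=0$ leaves a free parameter without falling into the listed infinite families.

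Your anticipation of the outcome for $A_1$, however, is wrong. With the first parameter forced to $2$, the triple is $(0,4\ell+1,2\ell+2)$; the order bound $2\ell+2\le 2\sqrt{4\ell+1}+2$ gives $\ell\le 4$, and among $\ell\in\{2,3,4\}$ the value $\ell=2$ makes $b=9$ a perfect square. So $\pm A_1(2,2)+K_2$ survive both filters and are cospectral with $K_4\times K_2$. They are not absorbed by anything already on the list: $(m-1)^2=9=4k\ell+1$ has no solution with $k\ge\ell\ge 2$, so the $\pm A_2$ clause does not cover them, and Table~\ref{list} records $(0,9,6,A_1)$, $(0,9,6,-A_1)$, $(0,9,6,A_3)$ as three separate entries with $A_3$ unstarred. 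Thus the check you rightly singled out as ``the crux on which exhaustiveness of the list ultimately turns'' does not come out clean; carrying it through would not reproduce the stated list but would instead reveal that the pair $\pm A_1(2,2)+K_2$ (for $m=4$) is missing from it.
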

\begin{proof}
The characteristic triple of $A_{22}(m)$ equals 
$(0,(m-1)^2,2m)=(0,b,2+2\sqrt{b})$.
Thus we need triples 
$(0,b,n)$ where $b$ is a square and $n\leq 2+2\sqrt{b}$.
By straightforward verification we obtain the described cases.
\end{proof}

\section{The friendship graph}\label{friendship}

The friendship graph $F_\ell$ is an unsigned graph that consists of 
$\ell$ edge disjoint triangles that meet in one vertex.
The friendship theorem states that $F_\ell$ is the unique graph with the property that every pair of distinct vertices has a unique common neighbor
(for this result neighbors are called friends).
It has been proved in~\cite{CHVW} that for $\ell\neq 16$ there is no unsigned graph cospectral with $F_\ell$. 
However, in this section we will see many signed graphs
which are cospectral, but not switching isomorphic with
$F_\ell$.

As a signed graph $F_\ell\in{\cal G}$ (when $\ell\geq 2$). 
The adjacency matrix is $A_{20}(1,\ell)$,
which is switching isomorphic with $A_1(1,\ell)$.
The chararteristic triple is equal to $(1,2\ell,2\ell+1)$.
 
\begin{thm}
There exists a signed graph cospectral 
but not switching isomorphic with the 
Friendship graph $F_\ell$ if and only if
$\ell\in\{12,18,30,39,54,60,75\}$,
$\ell\equiv 1\!\!\mod 3$, $\ell\equiv 1\!\!\mod 4$, 
$\ell$ is a square,
or a triangular number.
\end{thm}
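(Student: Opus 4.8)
The plan is to reduce the statement to the arithmetic search described in Section~2. Since $F_\ell$ has characteristic triple $(1,2\ell,2\ell+1)$ and belongs to ${\cal G}'$ for $\ell\ge 2$ (its two non-unit eigenvalues are the roots $(1\pm\sqrt{8\ell+1})/2$ of $x^2-x-2\ell$, which satisfy $r>1$ and $s<-1$), every signed graph cospectral with $F_\ell$ also belongs to ${\cal G}'$ and hence is, up to switching, negation and the addition of isolated edges, one of the matrices of Table~\ref{classification}. So I would list every entry of Table~\ref{classification} whose triple $(a',b',n')$ can be matched to $F_\ell$: namely $|a'|=1$ (replacing the entry by its negative when $a'=-1$), $b'=2\ell$, and $n'\le 2\ell+1$ with $n'$ odd, so that the deficit is absorbed by $(2\ell+1-n')/2$ isolated edges $K_2$. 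A signed graph cospectral with $F_\ell$ but not switching isomorphic to it exists exactly when some such entry other than $F_\ell$ itself occurs.

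Next I would run through the families and solve $b'=2\ell$. Four infinite one-parameter families account for the four main conditions: $A_0(q{+}1,q)$ and $A_4(q{+}1,q)$, with triples $(1,2q^2,2q{+}1)$ and $(1,2(q{+}1)^2,2q{+}5)$, yield exactly the squares; $A_3(q{+}1,q)$ and $A_\infty(q{+}1,q)$, with triples $(1,(q{+}1)(q{+}2),2q{+}3)$ and $(1,q(q{-}1),2q{+}1)$, yield exactly the triangular numbers; $A_1(3,t)$, with triple $(1,6t{+}2,2t{+}3)$, yields all $\ell\equiv 1\pmod 3$; and $A_9(m)$, with triple $(1,8m{+}2,2m{+}5)$, yields all $\ell\equiv 1\pmod 4$. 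The family $A_7(3,3)$ also has $a=1$ throughout, but its values $\ell=9m+1$ form a subset of the class $1\bmod 3$ and so contribute nothing new. Every remaining family has $|a'|=1$ for only finitely many values of its parameters and so produces finitely many $\ell$; after discarding those already caught above (for example $A_8$ gives $\ell=7\equiv 1\pmod 3$ and $A_{25}(3,5)$ gives the square $\ell=16$), precisely the seven values $\ell\in\{12,18,30,39,54,60,75\}$ --- arising from $A_6$, $A_{13}$, $A_{15}$ and the three subfamilies of $A_5$ --- survive as genuinely new.

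The clause ``not switching isomorphic'' I would settle uniformly: $F_\ell$ is connected, while every mate produced above is disconnected. Indeed, each construction either uses $A_\infty$ (a disjoint union of two signed complete graphs) or appends at least one isolated edge, and a one-line inequality confirms that the number $(2\ell+1-n')/2$ of appended edges is positive in every case except the defining one. Because switching leaves the underlying graph, and hence the component structure, unchanged, no such disconnected signed graph can be switching isomorphic to the connected $F_\ell$. This yields both implications at once: sufficiency by exhibiting an explicit mate for each listed $\ell$, and necessity by the exhaustiveness of the table search.

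The main obstacle I anticipate is the bookkeeping at the finite step: one must be certain the whole table has been examined and then sort each finite contribution into ``already covered by a square, triangular, or congruence condition'' versus ``genuinely new'', for it is exactly this sorting that isolates the seven exceptional values and shows the list is neither redundant nor deficient. A secondary subtlety is matching the parameter ranges to the clean statement at the smallest admissible $\ell$: one must check that every $\ell\ge 2$ satisfying a listed condition is actually realised --- for instance $\ell=4$, which is $\equiv 1\pmod 3$ yet lies outside the range of $A_1(3,t)$, is still covered because it is a square.
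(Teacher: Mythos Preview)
Your proposal is correct and follows essentially the same approach as the paper: exhaust Table~\ref{classification} for characteristic triples $(\pm 1,2\ell,n')$ with $n'\le 2\ell+1$, then read off the arithmetic conditions on $\ell$. You add some helpful structure the paper leaves implicit---explicitly pairing each of the four arithmetic conditions with its generating family and sorting the finite contributions---and you fold the ``not switching isomorphic'' clause into the main argument via disconnectedness, whereas the paper states that as a separate corollary afterwards; but the underlying method is the same table search.

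One small wording point: your sentence ``the number $(2\ell+1-n')/2$ of appended edges is positive in every case except the defining one'' is not literally true for $A_\infty(4,3)$, which has $n'=7=2\cdot 3+1$; however your preceding clause already covers this, since $A_\infty$ is disconnected regardless of whether edges are appended, so the argument stands.
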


\begin{proof}
Suppose $G^\sigma$ is a signed graph of order $n$ with no isolated edges, such that $G^\sigma+\alpha K_2$ is cospectral with $F_\ell$ for some $\alpha\geq 0$. 
Then $G^\sigma$ has characteristic triple
$(1,2\ell,n)$ with $n\leq 2\ell+1$.
From Table~\ref{classification} we obtain that
$G^\sigma$ must have one of the following 
characteristic triples:
\\[2pt]
\begin{tabular}{llll}
$(1,2(m-1)^2,2m-1)$ & $(1,6m+2,2m+3)$    & 
$(1,m(m+1),2m+1)$   & $(1,2(m+1)^2,2m+3)$\\ 
$(1,98,19)$         & $(1,120,21)$       & 
$(1,86,17)$         & $(1,108,19)$       \\
$(1,122,19)$        & $(1,150,21)$       &  
$(1,24,13)$         & $(1,36,13)$        \\ 
$(1,18m+2,2m+9)$    & $(1,14,9)$         & 
$(1,8m+2,2m+5)$     & $(1,92,17)$        \\ 
$(1,60,15)$         & $(1,62,15)$        & 
$(1,78,17)$         & $(1,44,13)$        \\ 
$(1,38,13)$         & $(1,32,13)$        &
$(1,(m-1)(m-2),2m-1)$. &                  \\
\end{tabular}
\\
Such a characteristic triple can 
be written as $(1,2\ell,n)$ for some integer $\ell$ and $n\leq 2\ell+1$, if and only if $\ell$ is as described above. 
\end{proof}

We remark that this result also appeared in \cite{WHL}. 
Unfortunately there the matrix $A_1$
with $m=3$ was overlooked, 
and hence the condition $\ell\equiv 1\!\!\mod 3$ is missing.

In none of the cases of the above proof,
the order $n$ is equal to $2\ell+1$. 
Thus we have:

\begin{cor}
Every signed graph cospectral
with $F_\ell$ is switching isomorphic to $F_\ell$, or disconnected.
\end{cor}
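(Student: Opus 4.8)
The plan is to leverage the observation made just before the corollary, together with the structure of the preceding theorem's proof. The key point is that the preceding theorem enumerates every characteristic triple $(1,2\ell,n)$ that can arise from a signed graph in $\cal G'$ cospectral with $F_\ell$ (up to adding isolated edges), and the remark immediately following that proof asserts that in none of those cases does the order $n$ equal $2\ell+1$. Since $F_\ell$ itself has order $2\ell+1$, any cospectral signed graph $H$ must have the same order $2\ell+1$ as $F_\ell$.

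First I would recall that $F_\ell$ has characteristic triple $(1,2\ell,2\ell+1)$, so any $H$ cospectral with $F_\ell$ has the same triple. Write $H=G^\sigma+\alpha K_2$, where $G^\sigma$ has no isolated edges and $\alpha\ge 0$ counts the isolated edges; then $G^\sigma$ has characteristic triple $(1,2\ell,n)$ with $n=2\ell+1-2\alpha\le 2\ell+1$. The theorem's proof lists all possible triples for such a $G^\sigma$, and by the remark none of them satisfies $n=2\ell+1$; that is, every admissible $G^\sigma$ forces $n<2\ell+1$, hence $\alpha\ge 1$. Therefore $H$ contains at least one isolated edge, which immediately makes $H$ disconnected --- \emph{unless} $H$ is switching isomorphic to $F_\ell$ itself, the one case where the matrix is $A_{20}(1,\ell)$ (equivalently $A_1(1,\ell)$) with no isolated component.

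The main step to make rigorous is therefore the claim $n\neq 2\ell+1$ across the entire list in the theorem's proof. I would carry this out by inspecting each of the twenty-three triple templates. For the infinite families one checks the order formula directly: e.g. for $(1,2(m-1)^2,2m-1)$ one has $2\ell=2(m-1)^2$ and $n=2m-1$, and $n=2\ell+1$ would require $2m-1=2(m-1)^2+1$, i.e. $(m-1)^2=m-1$, forcing $m\in\{1,2\}$, values excluded by the parameter restrictions in Table~\ref{classification}; the analogous small quadratic or linear comparisons dispatch the families $(1,6m+2,2m+3)$, $(1,m(m+1),2m+1)$, $(1,2(m+1)^2,2m+3)$, $(1,18m+2,2m+9)$, $(1,8m+2,2m+5)$, and $(1,(m-1)(m-2),2m-1)$. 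For each of the finitely many sporadic triples one simply verifies numerically that $2\ell+1=b+1\neq n$.

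The one subtlety --- and the step I expect to be the genuine obstacle --- is correctly ruling out the possibility that some admissible $G^\sigma$ with $\alpha=0$ is itself a connected signed graph other than $F_\ell$ realizing the triple $(1,2\ell,2\ell+1)$. This is precisely what the remark before the corollary asserts fails, but to be self-contained I would note that the only entry in the list whose order can equal $2\ell+1$ is the family $(1,m(m+1),2m+1)$ coming from $A_1(1,\ell)=A_{20}(1,\ell)$: here $2\ell=m(m+1)$ and $n=2m+1$, and $n=2\ell+1$ gives $2m+1=m(m+1)+1$, i.e. $m(m+1)=2m$, so $m=1$, which is exactly $F_\ell$. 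Thus the only connected cospectral mate is $F_\ell$ up to switching, and every other cospectral signed graph carries an isolated edge and is disconnected, completing the argument.
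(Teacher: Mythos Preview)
Your approach mirrors the paper's: deduce the corollary from the observation (stated just before it) that none of the characteristic triples in the proof of the preceding theorem has $n=2\ell+1$. However, your explicit verification of that observation contains a genuine gap. For the family $(1,(m-1)(m-2),2m-1)$, which comes from $A_\infty(m,m-1)$, the equation $n=2\ell+1$ reads $2m-1=(m-1)(m-2)+1$, equivalently $(m-1)(m-4)=0$; since Table~\ref{classification} permits $m\ge 4$ here, the value $m=4$ is admissible and yields the triple $(1,6,7)$, cospectral with $F_3$. So this family is \emph{not} dispatched by the ``analogous small quadratic comparison'' you invoke, and the remark you rely on is literally false at this one point (the paper's own remark shares the oversight).

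The corollary is nonetheless true, but for a reason your argument does not supply: $A_\infty(m,\ell)$ is by construction the disjoint union $K_m+(-K_\ell)$ and is therefore disconnected for all admissible parameters, so the lone exceptional case $A_\infty(4,3)$ already satisfies the conclusion. Your final paragraph also contains a misattribution: the triple $(1,m(m+1),2m+1)$ comes from $A_3(m,m-1)$, not from $A_1(1,\ell)$ or $A_{20}(1,\ell)$, and setting $m=1$ there lands outside the parameter range for $A_3$ rather than recovering $F_\ell$. The friendship graph itself corresponds to $-A_1(1,\ell)$ with triple $(1,2\ell,2\ell+1)$ and is simply not among the competing triples listed in the theorem's proof.
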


In other words, within the class of connected signed graphs every friendship graph is DSS.
\\

\noindent
{\bf\large Acknowledgement}
Part of the research was carried out when the second author visited The Netherlands on a grant from the mathematics cluster Diamant of the Dutch Research Council.

\end{document}